\newtheorem{thm}{Theorem}%[section]
\newtheorem{lemma}[thm]{Lemma}
\newtheorem{cor}[thm]{Corollary}
\newcommand{\al}{\alpha}
\newcommand{\be}{\beta}
\newcommand{\del}{\delta}
\newcommand{\eps}{\epsilon}
\newcommand{\seps}{\sqrt{\epsilon}}
\newcommand{\hV}{\hat V}
\newcommand{\hv}{\hat v}
\newcommand{\tV}{\tilde V}
\newcommand{\ran}{\rangle\hspace{-2pt}\rangle}
\newcommand{\lan}{\langle\hspace{-2pt}\langle}
\newcommand{\od}[1]{\langle#1\rangle}
\newcommand{\ev}[1]{\lan#1\ran}
\newcommand{\Ca}{{\mathcal C}}
\newcommand{\Ga}{{\mathcal G}}
\newcommand{\Va}{{\mathcal V}}
\newcommand{\Ea}{{\mathcal E}}
\newcommand{\Fa}{{\mathcal F}}
\newcommand{\Ma}{{\mathcal M}}
\newcommand{\Ta}{{\mathcal T}}
\newcommand{\Wa}{{\mathcal W}}
\begin{document}

\title[Ramsey numbers for a triple of cycles]
{The Ramsey numbers \\ for a triple of long cycles}
\author{Agnieszka Figaj}
\address{Institute of Economics and Management,
State Higher Vocational School in Leszno,
ul.~Mickiewicza 5, 64-100 Leszno, Poland}
\email{\tt agnieszka@figaj.pl}
\author{Tomasz {\L}uczak}
\address{Department of Discrete Mathematics, Adam Mickiewicz University,
ul.~Umultowska 87, 61-614 Pozna\'n, Poland}
\email{\tt tomasz@amu.edu.pl}
%\thanks{The second author partially supported
%by KBN grant 2 P03A 016 23}
\keywords {Ramsey number, cycle, Regularity Lemma}
\subjclass {Primary: 05C55. %Generalized Ramsey theory
Secondary: 05C38.} %Paths and cycles
\date{September 1, 2007}

\abstract
We find the asymptotic value
of the Ramsey number for a triple of  long cycles,
where the lengths of the cycles are large but may have different parity.
\endabstract

\maketitle

\section{Introduction}

If $G_1,G_2,\ldots,G_k$ are graphs, then the Ramsey number $R(G_1,\dots,G_k)$
is the smallest number $N$ such that
each coloring of the edges of the complete graph $K_N$ on $N$ vertices
with $k$ colors leads to a monochromatic copy of $G_i$ in the $i$th color
for some $i$, $1\le i\le k$. The exact value of $R(G_1,\dots,G_k)$
is known only  when all (or most) of $G_i$'s
are either small, or  of a very special kind
(cf.\ Radziszowski's `dynamic survey'~\cite{Rad}).
In this paper we consider the case
in which $k=3$ and all graphs are long cycles.

The value of the Ramsey number $R(C_{m_1},C_{m_2})$ for a pair of cycles
was determined independently by Faudree and Schelp~\cite{FS1}, and
Rosta~\cite{Ros} (see also \cite{KaRos}). A few years later
Erd\H{o}s {\em et al.}~\cite{EFRS1} found
the value of $R(C_{m_1},C_{m_2},C_{m_3})$ and $R(C_{m_1}, C_{m_2},
C_{m_3},C_{m_4})$ when one of the cycles %,  say $C_{m_1}$,
is much longer than the others.
Bondy and Erd\H{o}s~\cite{BE} (see also~\cite{Erd})
conjectured that if $m$ is odd, then the value of $R(C_m,C_m,C_m)$ is equal
to $4m-3$.  {\L}uczak~\cite{L} used the Regularity Lemma to show that
an `asymptotic
version' of this conjecture holds, i.e.,  for a large odd $m$
we have $R(C_m, C_m, C_m)=(4+o(1))m$. Then,
Kohayakawa, Simonovits and Skokan~\cite{KSS} employed the Regularity
Lemma to show that Bondy-Erd\H{o}s' conjecture holds for large values of $m$.
The asymptotic value of the Ramsey number for a triple of long even
cycles was found by Figaj and {\L}uczak~\cite{FL}
(see Theorem~\ref{thm:main}(i) below).
A similar result was proved independently by  Gy\'arf\'as~{\em et al.}~\cite{GRSS},
who also found  an exact solution for a closely related problem
of finding the Ramsey number for a triple of long paths of the same  length.

Below we establish
the asymptotic value  of the Ramsey number for a triple of long cycles
in the non-diagonal case. As in the case of the pair of
cycles, it turns out that the value of  $R(C_{m_1},C_{m_2},C_{m_3})$
strongly depends on the parity of $m_i$'s. Thus,
let $\od x$ be the maximum odd number not larger than $x$ and $\ev x$ denote
the maximum even number not larger than $x$. Then the main result
of this paper can be stated as follows.

\begin{thm}\label{thm:main} Let $\al_1,\al_2,\al_3>0$.
\begin{enumerate}
\item
$R(C_{\ev {\al_1 n}},C_{\ev{\al_2 n}},C_{\ev{\al_3 n}})=$

$\quad\quad\quad(0.5\al_1+0.5\al_2+0.5\al_3+0.5\max \{\al_1,\al_2,\al_3\}
+o(1))n$\,,
\item
$R(C_{\ev {\al_1 n}},C_{\ev{\al_2 n}},C_{\od{\al_3 n}})=$

$\quad(\max \{2\al_1 +\al_2,\al_1+2\al_2,0.5\al_1+0.5\al_2+\al_3\}+o(1))n$\,,
\item
$R(C_{\ev{\al_1 n}},C_{\od{\al_2 n}},C_{\od{\al_3 n}})=$

$\quad\quad\quad\quad\quad(\max \{4\al_1,\al_1+2\al_2,\al_1+2\al_3\}+o(1))n$\,,
\item
$R(C_{\od{\al_1 n}},C_{\od{\al_2 n}},C_{\od{\al_3 n}})$
$=(4\max \{\al_1,\al_2,\al_3\}+o(1))n$\,.
\end{enumerate}
\end{thm}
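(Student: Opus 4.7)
Case (i) of Theorem \ref{thm:main} has already been proved in \cite{FL}, so the plan is to establish the remaining three cases. In each case, both a lower bound (given by an explicit 3-coloring of $K_N$) and a matching asymptotic upper bound are required.

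For the lower bounds, the plan is to exhibit, for each case and each term inside the outer maximum, a 3-coloring of $K_N$ with $N$ one less than the claimed value that contains no monochromatic cycle of the required parity and length. The constructions partition the vertex set into a few balanced blocks and assign colors to the edges so that, in each color class, either all components are too small to accommodate a cycle of length $\al_i n$, or all components are bipartite with parts smaller than $\al_i n/2$, which rules out odd cycles of length $\al_i n$. In case (iv) this yields the standard ``four balanced blocks'' configuration; in cases (ii) and (iii) each summand of the maximum corresponds to a distinct partition with two or three blocks. A routine parity and component-size check confirms that the constructions avoid the prescribed cycles.

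For the upper bounds, the plan is to follow the regularity-plus-connected-matching strategy of \cite{L, KSS, FL}. Given a 3-coloring of $K_N$ with $N$ slightly above the claimed value, apply the colored Regularity Lemma to obtain an equitable partition of $V(K_N)$ into clusters and the associated 3-colored reduced multigraph $\Ga$. A standard ``lifting'' lemma (for which the argument in \cite{FL} applies almost verbatim) reduces the task of finding a monochromatic $C_{\ev{\al_i n}}$ in color $i$ to producing in $\Ga$ a color-$i$ connected matching of size $(0.5\al_i+o(1))|V(\Ga)|/N$, and the task of finding a $C_{\od{\al_i n}}$ to producing such a matching inside a non-bipartite color-$i$ component. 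This reduces each of cases (ii)--(iv) to a purely combinatorial extremal problem on $3$-colored complete graphs.

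The main obstacle is this reduced extremal problem, which has to be solved by a stability argument tailored to each case. Assuming that no color class contains a connected matching of the prescribed size and host-parity, one applies Gallai-type decompositions and Erd\H{o}s--Gallai-style inequalities to deduce, color by color, that each large component is essentially bipartite (when the corresponding $m_i$ is odd) or covered by a small vertex set (when $m_i$ is even). These three structural conclusions then have to be combined inside a single complete graph and shown to force one of the extremal constructions used for the lower bound, yielding a bound on $|V(\Ga)|$ matching the claimed value. The all-odd case (iv) is expected to be the most delicate, since all three color classes must simultaneously have bipartite large components and the three bipartitions must fit together on the same $K_M$; the mixed-parity cases (ii) and (iii) have several extremal candidates but simpler structural behavior within each color.
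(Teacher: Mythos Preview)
Your overall framework---lower-bound colorings plus the regularity reduction to connected matchings in the reduced graph---is exactly the scaffolding the paper uses. Where your plan diverges from the paper is in how the reduced extremal problem is actually solved, and in one place your expectation of difficulty is reversed.

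First, case~(iv). You flag it as ``the most delicate,'' anticipating a structural argument in which three bipartitions must be made to coexist on a single complete graph. The paper does none of this: it observes (Lemma~\ref{eqrr} and Corollary~\ref{wn}) that the relation $\sigma_{t,s}$ is monotone in the $a_i$'s, so \L uczak's diagonal result $\sigma_{0,3}(\alpha_1,\alpha_1,\alpha_1;4\alpha_1)$ from~\cite{L} immediately yields $\sigma_{0,3}(\alpha_1,\alpha_2,\alpha_3;4\alpha_1)$ whenever $\alpha_1=\max_i\alpha_i$. Case~(iv) is therefore two lines given~\cite{L}, not the hardest case.

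Second, for cases~(ii) and~(iii) the paper does not run a global three-color stability analysis. Instead it singles out one \emph{odd} color, say the third, and argues: if $G_3$ has no large non-bipartite connected matching, then by Lemma~\ref{l:nonbipartite} its dense part is bipartite with bipartition $(X,Y)$; deleting $Y$ leaves a two-colored nearly complete graph with a single ``hole'' $W=V\setminus(X\cup Y)$. The heart of the argument is then a pair of self-contained two-color lemmas about such holed graphs (Lemmas~\ref{l:dwa} and~\ref{l:trzy}), proved via Lemma~\ref{tutte} and Lemma~\ref{double}. This reduction to a two-color-with-a-hole problem replaces your ``combine three structural conclusions inside a single $K_M$'' step with a sequence of essentially two-color computations, which is both shorter and cleaner; your Gallai/Erd\H{o}s--Gallai outline is plausible but would have to rediscover this decomposition to be made effective.
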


Let us note that the first part of the above theorem is just a reformulation of
the result of Figaj and {\L}uczak~\cite{FL}, while the forth one
follows rather easily from the result of {\L}uczak~\cite{L} (see the end of
Section~\ref{sec:regularity}). Thus, our main task will be to
verify (ii) and~(iii).

The structure of the paper is the following.
In the next section we briefly sketch our approach
to the problem which  follows the idea introduced in~\cite{L}.
It is based on a simple observation that, because of the Regularity Lemma,
finding large monochromatic cycles in
$k$-colored graphs is not much harder than finding matchings
in monochromatic components in $k$-colored graphs. Thus,
in order to show Theorem~\ref{thm:main}, one needs to prove
a Ramsey-like result for matchings.
% (to be precise, strong and super-strong matchings).
To this end, in Section~\ref{sec:structure}
we state two  structural results characterizing graphs without
large matchings. In the next section we briefly describe the
way we approach this problem and give a few technical
lemmas needed for our argument.
Finally, in the next two parts of the paper,
Sections~\ref{sec:even} and \ref{sec:odd}, we prove the second and the third
parts of Theorem~\ref{thm:main}, respectively.

All graphs considered in this paper are simple, without loops and multiple edges.
For a graph $G=(V,E)$, and  disjoint subsets $A$, $B$ of~$V$,
by $e(A,B)=e_G(A,B)$ we mean the number of edges
$\{u,v\}$ with $u\in A$ and $v\in B$.
By $G[A]$ we denote  the subgraph
of $G$ induced by $A\subseteq V$.
Throughout the paper  $d(v)=d_G(v)$ stands
for the degree of a vertex $v\in V$
in a graph $G=(V,E)$, while $d(G)=2|E|/|V|$ is
the average degree of (vertices in) a graph $G$.
%The only piece of the non-standard notation we use is the notion
%of strong and superstrong matchings. Thus,
%a matching $M$ is strong in $G$ if all edges of $M$ are in the same
%component of $G$. If this component of $G$ is not bipartite subgraph,
%then the matching $M$ is called superstrong.

\section{Cycles and matchings}\label{sec:regularity}

Let us recall that the proof of Theorem~\ref{thm:main}
relies on a simple observation from~\cite{L} that
finding a matching contained in a monochromatic
component in a `cluster graph' covering an $\al$-fraction of vertices
leads to a monochromatic cycle covering $(1+o(1))\al$ vertices in
the original graph. Below  we state   this connection
in a formal way. The main technical nuisance here is that the
`cluster graph' used in the Regularity Lemma is not complete:
it lacks a small fraction of edges which, however, can be made
arbitrarily  small.

In order to make our argument precise we
introduce two relations $\sigma_{t,s}$ and $\tau_{t,s}$ defined in the
following way.  Let $t, s$ be nonnegative integers
and let $a_1,\ldots,a_{t+s},c$
be positive real numbers. Then the relation
$\sigma_{t,s}(a_1,\ldots,a_{t+s};c)$ holds if for every $\delta >0$
there exists $\eps >0$ and $n_0$ such that for every $n>n_0$ and any graph
$G$ on $N>(1+\delta)cn$ vertices, with at least $N>(1-\eps)\binom {N}{2}$
edges, every edge coloring of $G$ using $t+s$ colors results in either
an even cycle of length $\ev {a_i n}$ in the $i$-th color,
for some $i=1,\ldots,t$,
or an odd cycle of length $\od {a_j n}$ in the $j$-th color,
for some $j=t+1,\ldots,t+s$.

In a similar way we define the relation $\tau_{t,s}(a_1,\ldots,a_{t+s};c)$
which holds if for every $\delta >0$
there exists $\eps >0$ and $n_0$ such that for every $n>n_0$ and any graph
$G$ on $N>(1+\delta)cn$ vertices, with at least $N>(1-\eps)\binom {N}{2}$
edges, every edge coloring of $G$ using $t+s$ colors results in  either
a matching contained in a component of the $i$-th color,
for some $i=1,\ldots,t$, saturating at least $a_i n$ vertices,
or a matching contained in a non-bipartite component of the $j$-th color,
for some $j=t+1,\ldots,t+s$, saturating at least
$a_j n$ vertices.

Notice that if $\sigma_{t,s}(a_1,\ldots,a_{t+s}; c)$ holds then
\begin{equation}\label{eqr}
R(C_{\ev{a_1 n}},\dots,C_{\ev{a_t n}},C_{\od{a_{t+1} n}},
\dots, C_{\od{a_{t+s} n}}) \le (c+o(1))n.
\end{equation}
Indeed, inequality (\ref{eqr}) says that the condition
in definition of $\sigma_{t,s}$
holds for $\eps=0$.
The following simple fact is a straightforward consequence of
the definitions of $\sigma_{t,s}$ and $\tau_{t,s}$.

\begin{lemma}\label{eqrr}
\begin{enumerate}
\item If $\sigma_{t,s}(a_1,\dots, a_{t+s}; c)$
then $\tau_{t,s}(a_1,\dots, a_{t+s}; c)$.
\item If $t\le t'$, $t+s=t'+s'$,  $a_i\ge a'_i$ for $i=1,\dots,t+s$, and
$\tau_{t,s}(a_1,\dots, a_{t+s}; c)$ holds,
 then we have also $\tau_{t',s'}(a'_1,\dots, a'_{t+s}; c)$.\qed
\end{enumerate}
\end{lemma}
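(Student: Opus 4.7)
The plan for (i) is to pipe the cycle produced by $\sigma_{t,s}$ straight through. Fix $\delta>0$ and let $\eps,n_0$ be the constants supplied by the hypothesis $\sigma_{t,s}(a_1,\dots,a_{t+s};c)$ for this $\delta$. For any graph $G$ meeting the density conditions and any $(t+s)$-coloring of its edges, $\sigma_{t,s}$ delivers a monochromatic cycle $C$, either even of length $\ev{a_i n}$ in some color $i\le t$, or odd of length $\od{a_j n}$ in some color $j>t$. Taking alternate edges of $C$ yields a matching saturating $2\lfloor|C|/2\rfloor$ vertices, necessarily contained in the monochromatic component containing $C$; when $C$ is odd that component is automatically non-bipartite. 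The only bookkeeping nuisance is that $2\lfloor|C|/2\rfloor$ may fall short of $a_i n$ (respectively $a_j n$) by a constant independent of $n$; this constant is absorbed by invoking $\sigma_{t,s}$ with some $\delta'<\delta$ in place of $\delta$ and enlarging $n_0$ accordingly.

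The plan for (ii) is a short definition chase. Given $\delta>0$, I reuse the same $\eps$ and $n_0$ produced by $\tau_{t,s}(a_1,\dots,a_{t+s};c)$ for this $\delta$. For any admissible $G$ and coloring, $\tau_{t,s}$ returns a matching $M$ saturating at least $a_h n\ge a'_h n$ vertices inside a monochromatic component of some color $h$, with that component non-bipartite whenever $h>t$. If $h\le t'$, then $M$ witnesses the ``colors $\le t'$'' clause of $\tau_{t',s'}$ (in which bipartiteness plays no role). If $h>t'$, then $h>t$ as well, so the color-$h$ component is non-bipartite and $M$ witnesses the ``colors $>t'$'' clause. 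Either way $\tau_{t',s'}(a'_1,\dots,a'_{t+s};c)$ holds with the same $\eps$ and $n_0$.

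The lemma carries no serious obstacle; both parts reduce to unpacking the definitions of $\sigma_{t,s}$ and $\tau_{t,s}$. The one place a careful reader might pause is the minor $\ev{a_i n}$-versus-$a_i n$ discrepancy in (i), but as indicated it is handled by the arbitrary $\delta$-slack already built into both relations.
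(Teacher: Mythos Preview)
Your proof is correct and in line with the paper's treatment: the paper gives no proof at all, simply declaring the lemma ``a straightforward consequence of the definitions of $\sigma_{t,s}$ and $\tau_{t,s}$'' and closing with a \qed. Your write-up supplies exactly the definition-chase the authors omit. One small remark on your handling of the off-by-$O(1)$ discrepancy in (i): using a smaller $\delta'<\delta$ alone does not lengthen the cycle; the actual mechanism is to apply $\sigma_{t,s}$ with $n$ replaced by $n'=n+K$ for a constant $K$ depending on $\min_i a_i$, which is permitted precisely because $(1+\delta)cn>(1+\delta')cn'$ for large $n$. This is clearly what you intend, and it works.
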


The next result, crucial for our argument, states that
the relation from Lemma~\ref{eqrr}(i) can be  reversed.

\begin{lemma}\label{lr2}
If $\tau_{t,s}(a_1,\dots, a_{t+s}; c)$
 then $\sigma_{t,s}(a_1,\dots, a_{t+s}; c)$.
\end{lemma}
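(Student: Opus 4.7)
The plan is to use the Regularity Lemma to reduce the problem on $G$ to a problem about matchings in the cluster (``reduced'') graph, apply the hypothesis $\tau_{t,s}$ there, and then blow up the matching back to a long monochromatic cycle of the prescribed parity. This route was introduced in~\cite{L} and is by now fairly standard (see also~\cite{FL, KSS}).

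Fix $\delta>0$ and choose auxiliary constants $0<\eps'\ll d_0\ll \delta'\ll\delta$. Apply the Regularity Lemma simultaneously to the $t+s$ color classes of $G$, obtaining an equitable partition $V(G)=V_0\cup V_1\cup\cdots\cup V_m$ with clusters of equal size $L$ and $|V_0|\le \eps' N$, in which all but $\eps'\binom{m}{2}$ pairs $(V_j,V_k)$ are $\eps'$-regular in every color. Form the cluster graph $\Ga$ on $\{V_1,\dots,V_m\}$ by assigning the edge $V_jV_k$ color $i$ whenever $(V_j,V_k)$ is $\eps'$-regular in color $i$ with color-$i$ density at least $d_0$ (breaking ties arbitrarily). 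Because $G$ misses only an $\eps$-fraction of possible edges, a routine counting argument shows that $\Ga$ itself meets the near-complete density hypothesis of $\tau_{t,s}$ provided $\eps,\eps',d_0$ are chosen small enough. Applying $\tau_{t,s}(a_1,\dots,a_{t+s};c)$ to $\Ga$ with target parameter $n'=\lfloor m/((1+\delta')c)\rfloor$ then yields a matching $M$ saturating at least $a_in'$ vertices of $\Ga$, contained in a connected monochromatic component $\Ca$ of some color $i$; moreover $\Ca$ is non-bipartite whenever $i>t$.

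It remains to inflate $M$ into a color-$i$ cycle of length $\ev{a_i n}$ (if $i\le t$) or $\od{a_i n}$ (if $i>t$) inside $G$. Each edge $V_jV_k\in M$ is an $\eps'$-regular pair of color-$i$ density at least $d_0$, so standard regular-pair embedding lemmas produce in it a color-$i$ path covering almost all of $V_j\cup V_k$ whose length and parity can be freely tuned within a wide interval. Because all matching edges lie in the single connected monochromatic component $\Ca$, consecutive such long paths can be linked by short color-$i$ paths in $G$, yielding a closed walk of total length at least $(1-o(1))L\cdot a_in'\ge a_in$. Exact length is achieved by shortening the long paths appropriately, and parity is controlled directly inside the regular pairs; in the odd case $i>t$ the non-bipartiteness of $\Ca$ supplies a short odd closed walk that can be inserted once to flip the overall parity when needed. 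The main technical obstacle is precisely this simultaneous control of exact length and parity, and the required calculations are essentially those carried out in~\cite{L} and~\cite{FL}.
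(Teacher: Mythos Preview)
Your proposal is correct and follows essentially the same approach as the paper: apply the Regularity Lemma to the color classes, form the reduced (cluster) graph, invoke $\tau_{t,s}$ there to obtain a connected monochromatic matching (non-bipartite when $i>t$), and then blow it up to a cycle of the exact required length using long paths in the regular pairs together with short connections along the component, with the odd closed walk in $\Ca$ supplying the parity flip. The only cosmetic difference is that the paper first fixes a single closed walk in $\Ca$ of the correct parity containing all matching edges (via a spanning tree plus, if needed, the odd cycle) and then lifts it to $G$, whereas you describe building the long paths first and linking afterwards; these are two phrasings of the same construction.
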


The proof of Lemma~\ref{lr2} is based on the Szemer{\'e}di's Regularity Lemma.
Let us first recall some definition
related to this result.
Let $G=(V,E)$ be a graph and let $A$, $B$ be disjoint subsets of $V$.
We say that a pair $(A,B)$
is $(\epsilon ,G)$-regular for some $\epsilon > 0$ if for every
$A'\subseteq A$, $|A'|\geq \epsilon |A|$,  and $B'\subseteq B$,
$|B'|\geq \epsilon |B|$, we have
$$\left |{\frac{e(A',B')}{|A'||B'|}}-
{\frac{e(A,B)}{|A||B|}}\right|<\epsilon.$$
A partition $\Pi =(V_i)_{i=0}^{k}$ of the vertex set $V$ of $G$ is
$(\epsilon , k)$-equitable if $|V_0|\leq \epsilon |V|$ and
$|V_1|=\cdots=|V_k|$. An $(\epsilon ,k)$-equitable partition  $\Pi
=(V_i)_{i=0}^{k}$
is $(k, \epsilon , G)$-regular if at most ${\epsilon }\binom{k}{2}$
of the pairs $(V_i, V_j)$, $1\leq i< j\leq k$, are not $(\epsilon, G)$-regular.
Szemer{\'e}di's Regularity Lemma~\cite{S} (see also \cite{KS})
states that every graph $G$
admits an $(k, \epsilon , G)$-regular partition for some $k$,
where $1/\eps\le k\le K_0$, and the constant $K_0$ depends
only on $\eps$ but not on  the choice of $G$.
Below we use the following general version of this result.

\begin{lemma}\label{thm:rl}
For every $\epsilon >0$, $k$, and $\ell$, there exists
$K_0=K_0(\epsilon,k_0,\ell)$ such that the following holds.
For all graphs, $G_1,G_2,\ldots,G_{\ell}$,
with $V(G_1)=V(G_2)=\ldots=V(G_{\ell})=V$ and $|V|\geq k_0$, there
exists a partition $\Pi =(V_0, V_1, \dots, V_k)$ of $V$
such that $k_0\leq k\leq K_0$
and $\Pi $ is $(k, \epsilon , G_r)$-regular
for all $r=1,2,\ldots,\ell$.\hfill\qed
\end{lemma}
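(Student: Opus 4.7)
The plan is to adapt Szemer{\'e}di's original proof of the single-graph Regularity Lemma by tracking progress through a combined mean-square-density index summed over all $\ell$ graphs. Given a partition $\Pi = (V_0, V_1, \ldots, V_k)$ of $V$, define for each color $r \in \{1, \ldots, \ell\}$
\[ \mathrm{ind}_r(\Pi) = \frac{1}{|V|^2}\sum_{1 \le i < j \le k} |V_i|\,|V_j|\,d_{G_r}(V_i,V_j)^2, \]
where $d_{G_r}(A,B) = e_{G_r}(A,B)/(|A||B|)$, and set the joint index $\mathrm{ind}(\Pi) = \sum_{r=1}^\ell \mathrm{ind}_r(\Pi)$. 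One always has $0 \le \mathrm{ind}(\Pi) \le \ell$.

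The key step is the classical energy-increment lemma applied color by color: if $\Pi$ fails to be $(k, \epsilon, G_r)$-regular for some $r$, then there is a refinement $\Pi^{*}$ in which each non-exceptional class is split into at most $2^k$ pieces and such that $\mathrm{ind}_r(\Pi^{*}) \ge \mathrm{ind}_r(\Pi) + \epsilon^5$. Since refinement never decreases the index for any color, this also gives $\mathrm{ind}(\Pi^{*}) \ge \mathrm{ind}(\Pi) + \epsilon^5$. Starting with an arbitrary $(\epsilon, k_0)$-equitable partition and iterating the refinement (equitizing after each step by the standard trick), the joint index grows by at least $\epsilon^5$ per iteration, so the procedure must terminate after at most $\lceil \ell\epsilon^{-5} \rceil$ iterations in a partition which is $(k, \epsilon, G_r)$-regular for every $r$ simultaneously.

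To extract the bound $K_0 = K_0(\epsilon, k_0, \ell)$, note that each refinement multiplies the number of parts by at most $2^k$, and equitization contributes a further bounded factor, so iterating $O(\ell \epsilon^{-5})$ times yields a partition whose size is tower-type in $\epsilon^{-1}$ but still finite and dependent only on the stated parameters. The main technical burden, as always in this proof, is the careful bookkeeping around the exceptional class $V_0$ and the equitization of each refinement so that $|V_1| = \cdots = |V_k|$ while $|V_0| \le \epsilon |V|$; this is handled exactly as in the single-graph case, and the only effect of working with $\ell$ graphs is the extra factor $\ell$ in the iteration count. No essentially new ideas are required beyond Szemer{\'e}di's original argument.
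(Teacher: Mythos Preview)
Your sketch is correct and follows the standard energy-increment proof of the multicolor Regularity Lemma: define the mean-square-density index separately for each $G_r$, sum them, and observe that any refinement can only increase each $\mathrm{ind}_r$ (by convexity of $x^2$), so an increment of $\epsilon^5$ in one color propagates to the joint index. The iteration then terminates after at most $\lceil \ell\epsilon^{-5}\rceil$ steps, and the resulting bound $K_0$ is tower-type in $\epsilon^{-1}$ with height linear in $\ell$.

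However, note that the paper does not give its own proof of this lemma at all: the statement ends with a \qed\ symbol and is simply quoted as a known extension of Szemer\'edi's original result, with references to Szemer\'edi~\cite{S} and the survey of Koml\'os and Simonovits~\cite{KS}. So there is no ``paper's proof'' to compare against; you have supplied a (correct) proof where the authors chose to cite the literature instead. If you wish to include an argument, what you wrote is appropriate; you might tighten the presentation by pointing explicitly to the convexity inequality
\[
|A||B|\,d(A,B)^2 \le \sum_{i,j} |A_i||B_j|\,d(A_i,B_j)^2
\]
for any partitions $A=\bigcup_i A_i$, $B=\bigcup_j B_j$, which is what guarantees that refining for one color cannot hurt the index of another.
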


We shall also need the following simple property of
$(\epsilon ,G)$-regular pairs (for a similar result see, for instance, \cite{L}).

\begin{lemma}\label{wrl} Let $T\ge 2$, $0<\eps<1/(100T)$, and
let $G=(V,E)$ be a bipartite graph with bipartition $\{V_1,V_2\}$ such
that $|V_1|=|V_2|=n>10T\eps^{-2}$. Furthermore, let $e(V_1,V_2)\geq |V_1||V_2|/T$
and let the pair $(V_1,V_2)$ be $(\eps ,G)$-regular.
Then, for every $\ell$, $1\leq \ell\leq n-5\eps n$, and every pair of
vertices $v'\in V_1$, $v''\in V_2$, where $d(v'),d(v'')\geq n/(5T)$, $G$
contains a path of length $2\ell+1$ connecting $v'$ and $v''$.\hfill\qed
\end{lemma}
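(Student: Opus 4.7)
The plan is to reduce the problem to finding a path of length $2\ell - 1$ between two prescribed ``entry'' sets inside the regular pair, and then to construct such a path greedily, invoking $\eps$-regularity at every step and in particular at the closure. Set $A := N_G(v') \cap V_2$ and $B := N_G(v'') \cap V_1$; the hypotheses $d(v'), d(v'') \ge n/(5T)$ and $\eps < 1/(100T)$ give $|A|, |B| \ge n/(5T) > 20\eps n$. A path of length $2\ell + 1$ from $v' \in V_1$ to $v'' \in V_2$ is in bijection with a path $u_1 u_2 \cdots u_{2\ell}$ of length $2\ell - 1$ in $G - \{v', v''\}$ alternating between $V_2$ (odd $i$) and $V_1$ (even $i$), with $u_1 \in A$ and $u_{2\ell} \in B$. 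Deleting two vertices changes the regularity parameters and density only negligibly, so I work in a $(2\eps, G)$-regular pair of density $\ge 1/(2T)$.

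The construction proceeds in three moves. First, pass to a super-regular subpair by discarding at most $\eps n$ atypical vertices on each side (those with degree on the other side below $(1/T - \eps)n$), and intersect $A$, $B$ with what remains; the sizes stay $\ge 10\eps n$. Second, reserve a subset $B^* \subseteq B$ with $|B^*| = \eps n$ for the final step. Third, build $u_1, \ldots, u_{2\ell - 1}$ greedily, starting with $u_1 \in A$: at each intermediate step, $\eps$-regularity applied to the pair (side of $u_i$, current unused opposite side minus $B^*$)---both of size $\ge 4\eps n$ by the hypothesis $\ell \le n - 5\eps n$---ensures that all but $\le \eps n$ vertices on the side of $u_i$ have $\ge 2\eps n / T$ neighbors in the current admissible set; selecting each $u_{i+1}$ from this locally typical subset gives a valid extension.

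The closing step $u_{2\ell-1} \mapsto u_{2\ell} \in B^*$ cannot be achieved by degree alone and requires $\eps$-regularity directly: the sets $N_G(u_{2\ell-1}) \cap (V_1 \setminus \{\text{used}\})$ and $B^*$ both have size $\ge \eps n$, so regularity guarantees a positive number of edges between them, and any such edge yields the desired $u_{2\ell} \in B$. The main obstacle is keeping all the relevant sets---unused vertices on each side, the reserved target $B^*$, and the locally typical subsets for each greedy choice---of size $\ge \eps n$ simultaneously throughout the process. The hypothesis $\ell \le n - 5\eps n$ is calibrated precisely for this: $5\eps n$ slack on each side absorbs the $O(\eps n)$ losses from passing to the super-regular pair and reserving $B^*$, and still leaves $\ge \eps n$ unused vertices at the end for the regularity-based closure.
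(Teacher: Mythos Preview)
The paper does not actually prove this lemma; it is stated with a terminal \qed and a pointer to~\cite{L} for a similar result. Your overall strategy---pass to the neighborhoods $A=N(v')$, $B=N(v'')$, reserve a target $B^*\subseteq B$, and extend a path greedily while keeping every current vertex typical with respect to the next admissible set---is the standard embedding argument for paths in regular pairs, and is in substance what~\cite{L} does.

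Your closing step, however, is written incorrectly. You assert that $N_G(u_{2\ell-1})\cap(V_1\setminus\{\text{used}\})$ and $B^*$ both have size at least $\eps n$ and that ``regularity guarantees a positive number of edges between them.'' But both of these sets lie in $V_1$; $\eps$-regularity of the pair $(V_1,V_2)$ says nothing about edges inside $V_1$, and in a bipartite graph there are none. What you need is that $u_{2\ell-1}$ itself has a neighbor in $B^*$, and this is \emph{not} implied by the invariant you maintained, which only guarantees neighbors of $u_{2\ell-1}$ in the unused part of $V_1\setminus B^*$. The repair is to impose one additional typicality constraint when you select $u_{2\ell-1}$: since $|B^*|\ge\eps n$, fewer than $\eps n$ vertices of $V_2$ have under $(1/T-\eps)|B^*|$ neighbors in $B^*$, and you have more than $\eps n$ admissible candidates for $u_{2\ell-1}$ among the unused neighbors of $u_{2\ell-2}$, so you may choose one that is typical with respect to $B^*$ as well. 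Equivalently, halt the greedy build at $u_{2\ell-2}$ and apply regularity to the pair $\bigl(N_G(u_{2\ell-2})\cap(V_2\setminus\{\text{used}\}),\,B^*\bigr)$---now one set on each side---to extract the edge $u_{2\ell-1}u_{2\ell}$ directly.
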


Now we can show Lemma~\ref{lr2}.

\begin{proof}[Proof of Lemma~\ref{lr2}]
Let $a_i\ge a'_i$ for all $i=1,\dots,t+s$ and
let $G$ be a graph with $N>(1+\delta) c n$ vertices and at least
$(1-\eps_{\tau}^4(\del/2))\binom {N}{2}$ edges, where
$\eps_{\tau}(\del/2)$ is a constant defined as in relation
$\tau_{t,s}(a_1,\dots, a_{t+s}; c)$. Let us assume that
$(G_1,G_2,\ldots,G_{t+s})$ is a $t+s$-coloring of the edges of $G$.

Now let $\eps = \min \{\del/4, \eps_{\tau}^4(\del/2)\}$.
Apply Lemma~\ref{thm:rl} to find a partition
$\Pi=\{V_0,V_1,\ldots,V_k\}$ of vertices of $G$ such that
$1/\eps_{\tau}(\del/2)\leq k\leq K_0\,$
and $\Pi$ is $(k, \epsilon , G_{\ell})$-regular for all $\ell=1,2,\ldots,t+s$.

Let $\Ga=(\Va, \Ea)$ be the graph with vertex set $\Va=\{V_1, V_2, \dots,V_k\}$
and
\begin{multline*}
\Ea=\{\{V_i, V_j\}: (V_i, V_j)\textrm{\ is $(\eps , G_{\ell})$-regular
for}\  \ell=1, 2,\ldots,t+s,\\
%\indent\hspace{213pt}
\textrm{and\ } d_G(V_i,V_j)\ge \frac 12|V_i||V_j|\}\,.
\end{multline*}
Then $|\Ea|\geq (1-\eps_{\tau}^2(\del/2))\binom{k}{2}$.
Construct a $t+s$-coloring $(\Ga_1,\Ga_2,\ldots, \Ga_{t+s})$ of $\Ea$
by coloring an edge $\{V_i, V_j\}$ with the lexicographically first color $\ell$
for which
$$e_{G_{\ell}}(V_i, V_j)\geq \frac{|V_i||V_j|}{2(t+s)}\,.$$
Let $k=(1+\del/2)ck'$. Then there is a color $\ell_0$, $1\le \ell_0\le t+s$,
such that $\Ga$ contains a matching $\Ma=\{e_1,e_2,\dots,e_q\}$
saturating  $2q\ge a_{\ell_0}k'$
vertices of $G$ in the monochromatic component of $\Ga$ in the $\ell_0$th color;
furthermore, if $\ell_0\ge t+1$, then this component is non-bipartite.
Note that since for every $i$, $i=1,2,\ldots,k$, we have
$$|V_i|\ge \frac{(1+\del)cn-\eps n}{k}=\frac{(1+\del)cn-\eps n}{(1+\del/2)ck'}
\ge (1+\del/4)\frac{n}{k'},$$
the total number of vertices of $G$ contained in the sets $V_i$ saturated by $\Ma$
is at least
\begin{equation}\label{eq:1}
a_{\ell_0}k'|V_i|\ge a_{\ell_0}k'(1+\del/4)\frac{n}{k'}=(1+\del/4)a_{\ell_0}n
\ge (1+6\eps)a_{\ell_0}n\,.
\end{equation}
We shall show that the subgraph spanned in $G$ by these vertices contains a
monochromatic cycle in the $\ell_0$th color on precisely
$\ev {a_{\ell_0}n} $ vertices if $1\le \ell_0\le t$,
and with $\od {a_{\ell_0}n} $ vertices if $t+1\le \ell_0\le t+s$.

Let us assume first  $t+1\le \ell_0\le t+s$.
Let us recall that $\Ga$
contains a monochromatic component $\Fa$ in the $\ell_0$th color which contains
a matching  $\Ma=\{e_1,e_2,\dots,e_q\}$, $2q\ge a_{\ell_0}k'$, and an
odd cycle $\Ca=W_1W_2\dots W_{p'}\hV_1$. Observe that $\Fa$ contains
a closed walk $\Wa=\hV_1\hV_2\dots \hV_{p}\hV_1$ of
an odd length, containing all edges of $\Ma$. Indeed, it is easy to see that
the minimum connected subgraph of $\Fa$ which contains all edges of $\Ma$ and a vertex
$\hV_1$ of $\Ca$ is a tree $\Ta$. Since clearly there is an (even) walk
$\Wa'$ which traverses each edge of $\Ta$ precisely two times,
in order to get $\Wa$ it is enough
to enlarge $\Wa'$ by edges of $\Ca$.

Now, using elementary properties of $(\eps,G)$-regular pairs, it is easy to
find in $G$ an odd cycle $C=\hv_1\hv_2\dots \hv_p\hv_1$ such that
$\hv_i\in \hV_i$ for $i=1,\dots,p$, and whenever the pair $(\hV_i,\hV_{i+1})$
belongs to a matching $\Ma$ then both
$d_{G_{\ell_0}}(\{\hv_i\},\hV_{i+1})\ge
\frac{|\hV_{i+1}|}{4(t+s)}\,,$
 and
$d_{G_{\ell_0}}(\hV_i,\{\hv_{i+1}\})\ge
\frac{|\hV_{i}|}{4(t+s)}\,.$
Now we can apply Lemma~\ref{wrl} and replace all edges $\{\hv_i,\hv_{i+1}\}$,
such that $\{\hV_i,\hV_{i+1}\}$ belongs to $\Ma$, by long paths not containing
any other vertices of $C$. Since, by (\ref{eq:1}), the number of vertices
of $G$ contained in $V_i$'s saturated by $\Ma$ is larger than
$(1+6\eps)a_{\ell_0} n$,  we can do it in such a way that
the resulting monochromatic cycle in the $\ell_0$th color has length
$\od{a_{\ell_0} n}$.

If $1\le \ell_0\le t$ then the argument is basically the same.
Here we start with a closed walk $\Wa$ in $\Ga$ of an even length
which contains all edges of $\Ma$ and, based on $\Wa$, we construct an even
cycle $C$ in $G$. Then, as in the previous case, one can
use Lemma~\ref{wrl} to enlarge $C$ to a cycle of length $\ev{a_{\ell_0} n}$.
\end{proof}

{From} Lemma~\ref{eqrr} and~\ref{lr2} we get the following corollary.

\begin{cor}\label{wn}
If $t\le t'$, $t+s=t'+s'$ and $a_i\ge a'_i$ for $i=1,\dots,t+s$ and
$\sigma_{t,s}(a_1,\dots, a_{t+s}; c)$ holds then we have
$\sigma_{t',s'}(a'_1,\dots, a'_{t+s}; c)$.\qed
\end{cor}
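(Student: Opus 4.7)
The corollary is essentially a bookkeeping statement: it follows by composing the three implications already established, with no new combinatorial content. My plan is therefore to simply trace the chain
\[
\sigma_{t,s} \;\Longrightarrow\; \tau_{t,s} \;\Longrightarrow\; \tau_{t',s'} \;\Longrightarrow\; \sigma_{t',s'},
\]
keeping the parameter constant $c$ unchanged at every step and checking only that the side conditions on $t,s,t',s',a_i,a'_i$ match up with what each lemma requires.

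Concretely, I would start from the hypothesis $\sigma_{t,s}(a_1,\dots,a_{t+s};c)$ and apply Lemma~\ref{eqrr}(i) to pass to the matching-style relation $\tau_{t,s}(a_1,\dots,a_{t+s};c)$. Next, using the assumptions $t\le t'$, $t+s=t'+s'$, and $a_i\ge a'_i$, I would invoke Lemma~\ref{eqrr}(ii) to weaken this to $\tau_{t',s'}(a'_1,\dots,a'_{t+s};c)$; here the point is that Lemma~\ref{eqrr}(ii) allows simultaneously relabelling some of the ``even'' indices as ``odd'' (moving from $t$ to $t'\ge t$ while keeping $t+s=t'+s'$) and shrinking the parameters $a_i$ to $a'_i$. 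Finally, the reverse implication provided by Lemma~\ref{lr2} upgrades $\tau_{t',s'}$ back to $\sigma_{t',s'}(a'_1,\dots,a'_{t+s};c)$, which is the desired conclusion.

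Since every step is a direct invocation of a previously stated lemma, there is no genuine obstacle; the only thing to verify is the compatibility of the indices and parameters at the middle step, which is exactly the content of Lemma~\ref{eqrr}(ii). In particular, no regularity argument needs to be repeated — all of the nontrivial work (passing from cycles in dense quasi-random graphs to matchings in a reduced graph, via Szemerédi's Regularity Lemma and Lemma~\ref{wrl}) has already been absorbed into Lemma~\ref{lr2}. The proof can therefore be presented as a three-line deduction, and the \qed can be attached immediately.
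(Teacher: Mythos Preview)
Your proposal is correct and matches the paper's approach exactly: the paper states the corollary as an immediate consequence of Lemma~\ref{eqrr} and Lemma~\ref{lr2} (the sentence preceding the corollary reads ``From Lemma~\ref{eqrr} and~\ref{lr2} we get the following corollary'' and the statement carries a \qed). Your three-step chain $\sigma_{t,s}\Rightarrow\tau_{t,s}\Rightarrow\tau_{t',s'}\Rightarrow\sigma_{t',s'}$ is precisely the intended deduction.
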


Let us comment that, unlike the relation $\sigma_{t,s}(a_1,\dots,a_{t+s})$,
we do not know any simple proof that the Ramsey number for cycles
is monotone, e.g., although most certainly we  have
$$R(C_{20},C_{20},C_{20})\leq R(C_{22},C_{20},C_{20})
\leq R(C_{21},C_{20},C_{20})$$
it is by no means clear how to verify it directly (i.e., without estimating
the Ramsey numbers above).
However, using Corollary~\ref{wn}, we can deduce Theorem~\ref{thm:main}(iv)
from {\L}uczak's result on $R(C_n,C_n,C_n)$.

\begin{proof}[Proof of Theorem~\ref{thm:main}(iv)] Let us assume that
$\al_1\ge \al_2\ge \al_3>0$. {\L}uczak~\cite{L} showed that
$\sigma_{0,3}(\al_1,\al_1,\al_1;4\al_1)$ holds.
Thus, by Corollary~\ref{wn},
$\sigma_{0,3}(\al_1,\al_2,\al_3;4\al_1)$ holds as well and, consequently,
$$R(C_{\od{\al_1 n}},C_{\od{\al_2 n}},C_{\od{\al_3 n}})\le(4\al_1+o(1))n\,.$$
In order to show the lower bound for
$R(C_{\od{\al_1 n}},C_{\od{\al_2 n}},C_{\od{\al_3 n}})$
consider the following coloring of the complete graph $K_N$
on $N=4\od{\al_1 n}-4$ vertices. Split the vertices of $K_N$ into
four equal parts $V_1$, $V_2$, $V_3$, and $V_4$.
Color the edges inside each of
$V_i$'s with the first color, the edges in pairs $(V_1,V_2)$,
$(V_2,V_3)$, $(V_3, V_4)$ with the second one, and the edges in pairs
$(V_1,V_3)$, $(V_2,V_4)$, $(V_1,V_4)$ with the third color.
Clearly in this coloring we have no monochromatic cycles  longer than
$\od{\al_1 n}-1$ in the first color, and no odd cycles in
either the second or third color. Hence,
$$R(C_{\od{\al_1 n}},C_{\od{\al_2 n}},C_{\od{\al_3 n}})
\ge 4\od{\al_1 n}-3\,,$$
and Theorem~\ref{thm:main}(iv) follows.
\end{proof}

\section{Two structural results}\label{sec:structure}

This short section consists of two  simple results on the structure
of graphs without large matchings.

Let us start with the following consequence of Tutte's
theorem observed in~\cite{FL}. Since it is crucial
for our approach we recall its proof here for the completeness
of the argument.

\begin{lemma}\label{tutte}
If a graph $G=(V,E)$
contains no matchings saturating at least $n$ vertices, then
there exists a partition $\{S,T,U\}$ of $V$ such that:
\begin{enumerate}
\item the subgraph induced in $G$ by $T$ has maximum degree
at most $\sqrt{|V|}-1$,
\item there are no edges between the sets $T$ and $U$,
\item $|U|+2|S|<n+\sqrt{|V|}$.
\end{enumerate}
\end{lemma}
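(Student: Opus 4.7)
The plan is to apply the Tutte--Berge formula and then partition the components of $G-S$ by size. Since the maximum matching of $G$ saturates fewer than $n$ vertices, Tutte--Berge guarantees a set $S\subseteq V$ with
$$\mathrm{odd}(G-S) > |V|+|S|-n,$$
where $\mathrm{odd}(G-S)$ denotes the number of odd components of $G-S$.

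With such an $S$ fixed, I would take $T$ to be the union of the components of $G-S$ of order at most $\sqrt{|V|}$ and $U$ to be the union of the remaining components; then $\{S,T,U\}$ partitions $V$. Every component of $G[T]$ has at most $\sqrt{|V|}$ vertices, hence $G[T]$ has maximum degree at most $\sqrt{|V|}-1$, giving (i); and since $T$ and $U$ are unions of distinct components of $G-S$, no edge of $G$ runs between them, giving (ii).

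For (iii), each component contributing to $U$ has more than $\sqrt{|V|}$ vertices, so the number of such components is at most $|U|/\sqrt{|V|}$; in particular this bounds the number of odd components of $G-S$ lying in $U$. The number of odd components in $T$ is at most $|T|$. Combining these estimates with the Tutte--Berge lower bound and using $|V|=|S|+|T|+|U|$ yields
$$|T|+\frac{|U|}{\sqrt{|V|}} \ge \mathrm{odd}(G-S) > |T|+|U|+2|S|-n,$$
so $|U|+2|S|<n+|U|/\sqrt{|V|}\le n+\sqrt{|V|}$, establishing (iii).

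There is no genuine obstacle here; the only real choice is the size cutoff separating $T$ from $U$. A smaller cutoff improves the degree bound in $T$ but weakens the count of components in $U$, and vice versa, so the value $\sqrt{|V|}$ is precisely what balances both halves of the argument at the same scale.
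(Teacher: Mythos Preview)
Your proof is correct and follows essentially the same route as the paper: apply the Tutte--Berge formula to obtain $S$ with $\mathrm{odd}(G-S)>|V|+|S|-n$, then split the components of $G-S$ at the size threshold $\sqrt{|V|}$ and count. The only cosmetic difference is that the paper sorts only the odd components by size (implicitly relying on a choice of $S$ for which all components are odd), whereas you sort all components; your version is slightly cleaner in that it makes the partition $V=S\cup T\cup U$ manifest without this extra assumption.
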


\begin{proof} From Tutte's theorem, if a graph $G=(V,E)$ contains no matchings
saturating at least $n$ vertices, then there exists a subset $S$
such that the number of odd components in a graph $G[V\setminus S]$
is larger than $|V|+|S|-n$. Split the set of these components into two parts:
those with at most $\sqrt{|V|}$ vertices and those larger than $\sqrt {|V|}$.
The set of vertices which belong to the components from the former family
we denote by $T$,
the set of vertices of the component from the latter one by $U$.
Then, for such a partition $V=S\cup T\cup U$, (i) and (ii) clearly hold.
Moreover,
since there are fewer than $\sqrt {|V|}$ components larger than $\sqrt{|V|}$,
Tutte's condition gives
$$|T|>|V|+|S|-n-\sqrt{|V|}\,,$$
so that
$$|U|=|V|-|S|-|T|< n+ \sqrt{|V|}-2|S|\,,$$
which gives (iii).
\end{proof}

Graphs without a large matching contained in a non-bipartite
component  have a rather simple characterization as well
(cf. {\L}uczak~\cite{L}). Let us recall first a classical result of Erd\H{o}s and
Gallai~\cite{EG}.

\begin{lemma}\label{EG}
Each graph with $n$ vertices and at least $(m-1)(n-1)/2+1$ edges,
where $3\leq m\leq n$, contains a cycle of length at least $m$.
In particular, it contains a component with a matching saturating
at least $m-1$ vertices.\qed
\end{lemma}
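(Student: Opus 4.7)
The plan is to prove the cycle assertion by the classical Erd\H{o}s--Gallai extremal argument and then read off the matching statement as a direct corollary. I would induct on $n$, with the benchmark extremal configuration being a ``windmill'' of copies of $K_{m-1}$ sharing one common vertex: such a graph has $(n-1)(m-1)/2$ edges, contains no cycle longer than $m-1$, and must therefore be excluded by any proof of the lemma.

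First I would dispose of the low-degree case. If some vertex $v$ satisfies $d(v)\le(m-1)/2$, then deleting $v$ removes at most $(m-1)/2$ edges, leaving a graph on $n-1$ vertices with at least $(m-1)(n-2)/2+1$ edges, so the induction hypothesis applies directly. Assuming instead that $\delta(G)>(m-1)/2$, I would reduce to the $2$-connected case via the block decomposition: if every block $B$ satisfied $e(B)\le(m-1)(|B|-1)/2$, then summing and using $\sum_B(|B|-1)=n-1$ would force $e(G)\le(m-1)(n-1)/2$, contradicting the hypothesis. Hence some block $B$ inherits the required edge density while being $2$-connected.

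Inside this block I would invoke the Dirac--P\'osa circumference bound: in a $2$-connected graph with minimum degree at least $\delta$, the longest cycle has length at least $\min\{|B|,2\delta\}$. Combined with $\delta>(m-1)/2$, this produces a cycle of length at least $m$. A self-contained alternative argues directly on a longest path $v_0v_1\ldots v_k$ in $B$: by maximality, all neighbors of $v_0$ and $v_k$ lie on the path, and the pigeonhole estimate $|\{i:v_0v_i\in E\}|+|\{i:v_{i-1}v_k\in E\}|>k$ yields a cycle of length $k+1$ through every vertex of the path.

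The matching assertion then falls out immediately: any cycle of length $\ell\ge m$ contains a matching of $\lfloor\ell/2\rfloor$ edges, saturating $2\lfloor\ell/2\rfloor\ge m-1$ vertices, and this matching lies inside a single connected component of $G$. The main obstacle is the book-keeping: the $+1$ in the edge bound is tight, so the degree-deletion, block-reduction, and circumference steps must each be carried out carefully enough to exclude precisely the windmill configuration and its close variants.
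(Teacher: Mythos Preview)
The paper does not prove this lemma at all: it is quoted as the classical result of Erd\H{o}s and Gallai, cited and closed with a \qed. So there is no argument in the paper to compare against, and any correct proof would do.

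Your outline follows the standard route and has the right ingredients, but one step is misassembled. After the block decomposition you locate a $2$-connected block $B$ with $e(B)>(m-1)(|B|-1)/2$ and then apply the Dirac--P\'osa circumference bound \emph{inside $B$} using $\delta>(m-1)/2$. That inequality, however, was established for $G$, not for $B$: cut vertices of $G$ lying in $B$ may have arbitrarily small degree within $B$, so $\delta(B)$ need not exceed $(m-1)/2$. Concretely, take $K_m$, subdivide one edge by a new vertex $w$, and hang a large clique off $w$; then $\delta(G)=m-1$, the subdivided block is $2$-connected and satisfies the edge bound, yet has $\delta(B)=2$, so Dirac only yields a $4$-cycle there. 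The repair is immediate and already implicit in your setup: if $|B|<n$, apply the induction hypothesis directly to $B$; only when $B=G$ (so $G$ itself is $2$-connected) do you invoke the circumference bound, and there $\delta(G)>(m-1)/2$ is legitimately available. With that reordering the argument is complete, and the matching corollary follows exactly as you say.
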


Now our second structural lemma can be stated as follows.

\begin{lemma}\label{l:nonbipartite}
If no non-bipartite component of a graph $G=(V,E)$ on $n$ vertices
contains a matching saturating at least $\al n$ vertices, then
there exists a partition $V=V'\cup V''$ of $V$ such that:
\begin{enumerate}
\item $G$ contains no edges between sets $V'$ and $V''$,
\item the graph $G'=G[V']$ induced in $G$ by $V'$ is bipartite,
\item the graph $G''=G[V'']$ induced in $G$ by $V''$ contains
at most  $0.5\al n|V(G'')|$ edges.
\end{enumerate}
\end{lemma}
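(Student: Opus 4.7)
Split $V$ along the connected components of $G$: let $V'$ be the union of the vertex sets of all \emph{bipartite} components of $G$, and let $V''$ be the union of those of all \emph{non-bipartite} components. Conditions (i) and (ii) are then immediate: no edge of $G$ crosses between $V'$ and $V''$ (the partition respects the component structure), and $G[V']$, being a disjoint union of bipartite graphs, is itself bipartite.

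For (iii), I would apply the Erd\H{o}s--Gallai lemma (Lemma~\ref{EG}) directly to $G''=G[V'']$. The key point is that every connected component of $G''$ is, by construction, a non-bipartite component of $G$, so by hypothesis none contains a matching saturating $\al n$ vertices. Choosing $m-1=\lceil \al n\rceil$ in Lemma~\ref{EG}, if $G''$ had more than $(m-1)(|V''|-1)/2$ edges then some component of $G''$ would contain a matching saturating $m-1\ge \al n$ vertices, contradicting the hypothesis. This gives
\[
|E(G'')|\;\le\;\tfrac{1}{2}\lceil \al n\rceil\,(|V''|-1),
\]
which is the bound in (iii), up to the standard integer-rounding from $\al n$ to $\lceil \al n\rceil$.

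There is really no substantial obstacle here: the lemma reduces to a clean one-line application of Erd\H{o}s--Gallai to the obvious component-based partition. The only minor bookkeeping is verifying that $m$ lies in the allowed range $3\le m\le |V''|$ required by Lemma~\ref{EG}, which is routine once $\al n$ is not too small (the remaining small cases can be checked by hand or absorbed, and are in any case irrelevant in the asymptotic setting in which Lemma~\ref{l:nonbipartite} will be used).
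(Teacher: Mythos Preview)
Your proposal is correct and follows essentially the same approach as the paper: the partition is defined by collecting bipartite versus non-bipartite components, and (iii) is obtained via the Erd\H{o}s--Gallai lemma. The only cosmetic difference is that the paper applies Erd\H{o}s--Gallai to each non-bipartite component separately (bounding its average degree by $\al n$) and then sums, whereas you apply it once to the whole of $G''$; since every component of $G''$ is non-bipartite by construction, both routes yield the same bound.
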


\begin{proof}
Denote by $H_1,\dots,H_r$ components of  $G$.
Let $V'$ consist of the vertices of all the components which are
bipartite and $V''=V\setminus V'$.
Then, (i) and (ii) clearly hold. Note also that
if any non-bipartite component $H_i$ has average degree larger than
$\al n$, then,
by Erd\H{o}s-Gallai theorem, it contains  a cycle longer than $\al n$
and thus also a matching of size at least $\al n$ contradicting our assumption.
Thus, every component of $G''$ has average degree at most $\al n$
and (iii) follows.
\end{proof}

\section{The first look at the matching problem}

Let us recall that, in order to show Theorem~\ref{thm:main},
it is enough to verify the property $\tau_{t,s}(\al_1,\al_2,\al_3;c)$
for appropriately chosen $t$, $s$, $t+s=3$, and $c=c(\al_1,\al_2\al_3$,
i.e., we need to prove that
in every sufficiently large three-colored `nearly complete' graph $G$ we can find
a monochromatic component containing a large matching. Lemmas~\ref{tutte}
and~\ref{l:nonbipartite} suggest the following approach. Suppose that
a component $F=(V_F,E_F)$ of the subgraph $G_1$ of $G$ induced by the first color
contains no large matchings. Then, using Lemma~\ref{tutte},
one can decompose the set  vertices $V_F$ of $F$ into three sets $S$, $T$, $U$.
Delete from $F$ all vertices from $S$. Then in the remaining graph $H=F[T\cup U]$,
all edges joining $T$ and $U$, as well as all but a negligible fraction of
edges contained in $T$, are colored with either the second or the third color.
Consequently, to study matchings in three-colored `nearly complete' graphs,
one should first study  matchings in two-colored `nearly complete' graphs
with `holes' (in our case the hole is the set $U$).

Lemma~\ref{l:nonbipartite} suggests a similar approach.
Suppose that in a 'nearly complete'
three-colored $G=(V,E)$ on $n$ vertices the graph $G_1$ induced by the first color
has average degree $d(G_1)=\rho n >\al_1 n$ yet it contains no non-bipartite
component with a  matching saturating at least $\al_1 n$ vertices.
Then, there is a partition of the set of vertices of $G_3$
in the form $V=W_1\cup W_2\cup R$, where $\{W_1,W_2\}$ is a bipartition of $G'$,
$R$ is the set of vertices of $G''$, and $G'$ and $G''$ are the graphs
described in  Lemma~\ref{l:nonbipartite}. Note that since $d(G'')\le \al_1 n$,
so we must have $d(G')\ge \rho n$, and so the larger of the sets $W_1$,
$W_2$, say $W_1$, must have at least $\rho (n)$ vertices and
$|R|\le (1-2\rho) n$. Thus, in this case, the graph $F=G[W_1\cup R]$
is a 'nearly complete' graph with nearly all of the edges,
except those contained in the hole $R$,  colored with just two colors.

 Since our proof of Theorem~\ref{thm:main} is based on the above idea
here we state two results, Lemmas~\ref{l:dwa} and \ref{l:trzy},
which determine the size of the largest
matchings when the edges of a  `nearly complete graph with a hole'
are colored with two colors. We begin however with two
technical results from~\cite{FL} we state  without proofs: the first
one characterizes   matchings in a `nearly complete' bipartite graphs,
the second one describes matchings in `nearly complete' bipartite graphs with
two holes.

\begin{lemma}\label{l2}
Let $G=(V,E)$ be a bipartite graph with bipartition
$\{V_1,V_2\}$, where $|V_1|\ge|V_2|$,
and at least $(1-{\eps})|V_1||V_2|$ edges, where
$0<{\eps}<0.01$. Then there is  a component in $G$
of at least $(1-3\eps)(|V_1|+|V_2|)$ vertices which contains
a matching of cardinality at least $(1-3{\eps})|V_2|$.\qed
\end{lemma}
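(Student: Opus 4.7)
The plan is to locate a single almost-spanning component of $G$ and then find an almost-perfect matching inside it via K\"onig's theorem. Set $n_1=|V_1|$ and $n_2=|V_2|$ with $n_1\ge n_2$; the hypothesis says the bipartite complement of $G$ has at most $\eps n_1n_2$ edges.

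First I would identify the giant component. Let $C_1,C_2,\dots$ be the connected components of $G$ and put $(a_i,b_i)=(|V(C_i)\cap V_1|,|V(C_i)\cap V_2|)$. Any pair $(u,v)\in V_1\times V_2$ with $u,v$ in different components is necessarily a non-edge, so
$$n_1n_2-\sum_ia_ib_i\le\eps n_1n_2,\qquad\text{i.e.,}\qquad\sum_ia_ib_i\ge(1-\eps)n_1n_2.$$
The trivial bound $\sum_i a_ib_i\le(\max_i a_i)\,n_2$ then forces $\max_i a_i\ge(1-\eps)n_1$, and symmetrically $\max_i b_i\ge(1-\eps)n_2$. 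If these two maxima were attained in different components $C_i\ne C_j$, the non-edges between $C_i$ and $C_j$ alone would number at least $a_ib_j\ge(1-\eps)^2n_1n_2$, exceeding $\eps n_1n_2$ since $\eps<0.01$. Hence both maxima occur in a single component $C$, with parameters $(a,b)$ satisfying $a\ge(1-\eps)n_1$ and $b\ge(1-\eps)n_2$; in particular $|V(C)|\ge(1-\eps)(n_1+n_2)\ge(1-3\eps)(n_1+n_2)$.

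Next I would bound the edges inside $C$. Every edge of $G$ not in $C$ must join $V_1\setminus V(C)$ to $V_2\setminus V(C)$, contributing at most $(n_1-a)(n_2-b)\le\eps^2 n_1n_2$ edges; thus $e(C)\ge(1-\eps-\eps^2)n_1n_2$. Let $\mu$ be the maximum matching of $C$. By K\"onig--Egerv\'ary, $C$ has a vertex cover of size $\mu$, say $X_1\cup X_2$ with $X_1\subseteq V_1$ and $X_2\subseteq V_2$, and every edge of $C$ meets $X_1\cup X_2$, so
$$e(C)\le|X_1|\,b+|X_2|\,a\le(|X_1|+|X_2|)\max(a,b)\le\mu n_1,$$
using $a\le n_1$ and $b\le n_2\le n_1$. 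Combining the two estimates gives $\mu\ge(1-\eps-\eps^2)n_2\ge(1-3\eps)n_2$, and this matching lies entirely inside $C$.

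The main obstacle I anticipate is the insistence that the matching live inside a \emph{single} component: without that restriction, a direct K\"onig argument applied to all of $G$ already produces the required number of matching edges. The component-pairing step that places the two extremal maxima into the same component is where the numerical hypothesis $\eps<0.01$ is genuinely used, via $(1-\eps)^2>\eps$; after that, the bound on $\mu$ follows from a clean K\"onig estimate with no case analysis.
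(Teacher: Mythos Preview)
Your proof is correct. The paper does not include its own argument for this lemma; it is imported from the authors' earlier paper (reference~\cite{FL}) and stated without proof. Your approach---counting cross-component $V_1\times V_2$ pairs as non-edges to force a single component $C$ with $a\ge(1-\eps)n_1$ and $b\ge(1-\eps)n_2$, and then bounding its matching via K\"onig's theorem through $e(C)\le |X_1|b+|X_2|a\le\mu n_1$---is clean, self-contained, and in fact delivers the slightly stronger matching size $(1-\eps-\eps^2)|V_2|$. One small remark on your closing comment: the inequality $(1-\eps)^2>\eps$ that you flag as the place where the hypothesis $\eps<0.01$ enters actually holds for all $\eps<(3-\sqrt5)/2\approx0.382$, so the numerical assumption is far from sharp for your argument as well.
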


\begin{lemma}\label{double}
Let $0\le \nu_1\le \nu_2\le 1$, $0<\eps<0.01\nu_1$, $N\ge 4/\eps$,
and let $U_1$, $U_2$ be two, not necessarily disjoint,
subsets of $[N]=\{1,2,\dots, N\}$ of $\nu_1 N$ and $\nu_2 N$
vertices respectively.
Let $G=([N],E)$ be a graph obtained from the complete graph
on the set $[N]$ vertices by removing all edges
contained in  $U_i$, $i=1,2$, and, possibly, at most $\eps^3 \binom {N}{2}$
other edges. Then $G$ contains a component with a matching saturating at least:
\begin{enumerate}
\item $(1-5\eps)N$ vertices if $|U_2|\le N/2$;
\item $(2-7\eps)N-2|U_2|$ vertices if $|U_2|\ge N/2$.\qed
\end{enumerate}
\end{lemma}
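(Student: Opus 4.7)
My plan is to work with the partition $V=A\sqcup B\sqcup C\sqcup D$, where $A=U_1\setminus U_2$, $B=U_2\setminus U_1$, $C=U_1\cap U_2$, $D=V\setminus(U_1\cup U_2)$, with sizes $a,b,c,d$ satisfying $a+c=\nu_1 N$, $b+c=\nu_2 N$ and $a+b+c+d=N$. The pair-types of $K_N$ removed by the two ``holes'' are precisely those internal to $A$, $B$, or $C$ together with the $A$-$C$ and $B$-$C$ pairs, while the remaining pair-types (internal to $D$, and $A$-$B$, $A$-$D$, $B$-$D$, $C$-$D$) are present up to the $\eps^3\binom{N}{2}$ adversarial losses. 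In particular every $v\in D$ has potential degree $N-1$, so a standard counting argument shows that whenever $d\ge\eps N$ all four parts lie in a single component of $G$; the subcase $d<\eps N$ is degenerate and will be addressed by direct inspection.

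\emph{Case (i), $\nu_2\le 1/2$.} The hypothesis $\nu_1+\nu_2\le 1$ gives $c\le d$, and $\nu_1,\nu_2\le 1/2$ gives $a\le b+(d-c)$ and $b\le a+(d-c)$. I build the matching in two stages. In Stage~1, apply Lemma~\ref{l2} to the near-complete bipartite graph between $C$ and $D$ to obtain a matching $M_1$ of size $c-O(\eps N)$ that saturates almost all of $C$. In Stage~2, let $D'$ denote the unused part of $D$; inside $A\cup B\cup D'$ the only missing pair-types are the internal ones of $A$ and of $B$. First match $A$ into $B$ via Lemma~\ref{l2} on the near-complete $K_{a,b}$, then match the surplus of $B$ (at most $b-a\le |D'|$ vertices) into $D'$, and finally match the leftover of $D'$ inside itself by one more application of Lemma~\ref{l2} to a balanced bipartition. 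The resulting matching lies in the giant component identified above and saturates $N-O(\eps N)\ge(1-5\eps)N$ vertices. The boundary case $d=0$ forces $\nu_1=\nu_2=1/2$, $c=0$, and is settled by one application of Lemma~\ref{l2} to the $A$-$B$ near-complete bipartite graph.

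\emph{Case (ii), $\nu_2\ge 1/2$.} Now the target is a matching saturating essentially all of $X:=V\setminus U_2=A\cup D$, matched into $U_2=B\cup C$. The only missing pair-type across $X$ and $U_2$ is $A$-$C$ (together with adversarial noise). I would match in two stages: first $A$ into $B$ by Lemma~\ref{l2}, which is possible since $a\le b$ (from $\nu_1\le\nu_2$); then $D$ into what remains of $B\cup C$ by Lemma~\ref{l2}, which is possible since $d\le(b-a)+c$ (equivalent to $\nu_2\ge 1/2$). The resulting matching has size $(1-\nu_2)N-O(\eps N)$, lies inside the giant component, and saturates $2(1-\nu_2)N-O(\eps N)\ge(2-7\eps)N-2\nu_2 N$ vertices, as required.

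The main obstacle is the accounting of $O(\eps)$-losses across the several applications of Lemma~\ref{l2}: the aggregated defect must stay below $5\eps N$ in Case~(i) and $7\eps N$ in Case~(ii). The assumption $\eps<0.01\nu_1$ guarantees that every bipartite subgraph invoked has density bounded away from zero, so each application of Lemma~\ref{l2} loses only $O(\eps^2 N)$ vertices and a constant number of steps suffices. A secondary subtlety is verifying that all matched vertices end up in the same component; when $d\ge\eps N$ this is automatic from the giant-component observation, while the regime where some of $a,b,c,d$ drops below $\eps N$ forces a near-complete bipartite or near-clique structure on the non-trivial parts, and the required component statement can then be checked directly.
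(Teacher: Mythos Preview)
The paper does not give a proof of this lemma; it is imported from~\cite{FL} and stated with the trailing \qed\ (see the sentence introducing Lemmas~\ref{l2} and~\ref{double}: ``two technical results from~\cite{FL} we state without proofs''). So there is no in-paper argument to compare against, and your outline should be judged on its own.

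On those terms, the outline is correct and is the natural proof. The four-block decomposition $A\sqcup B\sqcup C\sqcup D$ correctly isolates the surviving pair-types; the inequalities $c\le d$ and $|a-b|\le d-c$ in Case~(i), and $a\le b$, $d\le (b-a)+c$ in Case~(ii), are exactly the feasibility conditions needed to thread the matching through the available near-complete bipartite pieces; and the connectivity claim via a high-degree vertex of $D$ is valid whenever $d\ge\eps N$, with the degenerate sub-cases handled as you indicate.

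One point to tighten: your assertion that ``each application of Lemma~\ref{l2} loses only $O(\eps^{2}N)$ vertices'' presumes both sides of the bipartite piece have size $\Omega(\nu_1 N)$, but the intermediate leftovers $B'$, $D'$, $D''$ may shrink to order $\eps N$. In that regime the adversarial deletion of $\eps^{3}\binom{N}{2}$ edges only yields density $1-O(\eps)$, so the loss per call to Lemma~\ref{l2} is $O(\eps N)$, not $O(\eps^{2}N)$. This is not fatal: if you sum the defects over your four stages the sizes telescope (the total of the ``smaller sides'' is at most $N/2$), so the aggregate loss is about $\tfrac{3}{2}\eps' N$ with $\eps'\approx\eps/2$, comfortably within the $5\eps N$ and $7\eps N$ budgets. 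But the sentence as written overstates the saving, and you should also make explicit the disposal of any block among $a,b,c$ that falls below $\eps N$ (discard it at cost at most $\eps N$) rather than folding that into the final paragraph.
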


Before we state and prove two main results of this section on
matchings in two-colored `nearly complete' graphs with holes, let us
make a simple observation we shall  often use in the proof.
Suppose that a graph $G_W=(V,E)$ is obtained from the complete graph
with vertex set $V$ by removing all edges contained in $W\subseteq V$.
Let us color edges of $G_W$ by two colors, and let $G_1$,
$G_2$ be spanning subgraphs of $G$ induced by the first
and the second color respectively. Then, either one
of these graphs is connected, or there is a partition of
$W=W_1\cup W_2$ into two non-empty sets $W_1$, $W_2$ such that all
edges with one end in $W_i$, $i=1,2$, are colored with the $i$th color.

\begin{lemma}\label{l:dwa}
For every $\al, \be>0$, $ \nu \ge 0$, $\max \{\al,\be,\nu\}=1$, and
$0<\eps <0.01\min \{\al,\be\}$, there exists $n_0$, such that
for every  $n>n_0$ the following holds.

Let $G=(V,E)$ be a graph obtained from the complete graph
on
$$N=(0.5\al +0.5\be +\max \{\nu ,0.5\al,0.5\be\}+3\sqrt{\eps})n$$
vertices by removing all edges contained in a subset $W\subseteq V$ of size
 $\nu N$ and no more than  $\eps ^3n^2$ other edges.
Then, every coloring of the edges of $G$ with two colors leads to
either a monochromatic component colored with the first color
containing a matching saturating at least   $(\al +\eps)n$
vertices, or  a monochromatic component of the second color
containing a matching saturating at least  $(\be +\eps)n$
vertices.
\end{lemma}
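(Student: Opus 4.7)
The plan is to argue by contradiction: assume that no $G_1$-component contains a matching saturating $(\al+\eps)n$ vertices and no $G_2$-component contains a matching saturating $(\be+\eps)n$ vertices. Invoking the observation stated just before the lemma (and absorbing the $\eps^3 n^2$ missing edges of $G$ into negligible error), the situation splits into two cases: either (a) one of $G_1,G_2$ has a component covering all but $o(n)$ vertices of $V$, or (b) $W$ admits a partition $W=W_1\cup W_2$ such that for each $i$ essentially all edges from $W_i$ to $V\setminus W$ have color~$i$. I will treat (a) in detail and handle (b) by an analogous but simpler bipartite argument.

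In case~(a), without loss of generality $G_1$ has such an almost spanning component $F$. Since $F$ carries no matching saturating $(\al+\eps)n$ vertices, Lemma~\ref{tutte} applied to $F$ produces a partition $V(F)=S\cup T\cup U$ with $|U|+2|S|<(\al+\eps)n+\sqrt{|V(F)|}$, no $F$-edges between $T$ and $U$, and $F$-degree at most $\sqrt{|V(F)|}-1$ inside $T$. Consequently $G_2[T\cup U]$ is obtained from the complete graph on $T\cup U$ by deleting only the edges inside $U$, the edges inside $W\cap(T\cup U)$, and a further negligible set of size $O(|T|\sqrt{N}+\eps^3 n^2)$. Viewing $U$ and $W\cap(T\cup U)$ as two holes, Lemma~\ref{double} produces a $G_2$-component with a matching saturating either $(1-5\eps)|T\cup U|$ vertices (when the larger of the two holes has size at most $|T\cup U|/2$), or $(2-7\eps)|T\cup U|-2\max\{|U|,|W\cap(T\cup U)|\}$ vertices otherwise.

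It then remains to verify the arithmetic. Writing $|T\cup U|=N-|S|$ and substituting $|U|+2|S|<(\al+\eps)n+\sqrt N$, each of the two branches above reduces to a linear inequality in $\al$, $\be$, $\nu$, and $3\sqrt\eps$ whose right-hand side is $(\be+\eps)n$. A case split on whether $\max\{\nu,0.5\al,0.5\be\}$ equals $\nu$, $0.5\al$, or $0.5\be$ shows that each such inequality follows directly from the hypothesis on $N$. Case~(b) is a single line: the dense bipartite structure between $W_i$ and $V\setminus W$ yields via Lemma~\ref{l2} a $G_i$-component with a matching of size roughly $2\min\{|W_i|,|V\setminus W|\}$, and since $|V\setminus W|=(1-\nu)N\ge(0.5\al+0.5\be)n$, no choice of $W_1,W_2$ with $|W_1|+|W_2|=\nu N$ can keep both matchings below their respective thresholds.

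The main obstacle is the second branch of Lemma~\ref{double} in case~(a): there the $G_2$-matching bound degrades linearly in the size of the dominant hole, and only the full strength of the hypothesis---with the precise constant $\max\{\nu,0.5\al,0.5\be\}$ rather than, say, $\nu$ alone---compensates for this loss. The three terms inside the maximum are precisely what is needed to absorb the slack in each of the regimes where $\nu N$, $|U|$, or $|S|$ in turn becomes the binding constraint.
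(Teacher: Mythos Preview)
Your Case~(a) is essentially the paper's Case~1 and is handled correctly: apply Lemma~\ref{tutte} to the almost-spanning component $F$, then apply Lemma~\ref{double} to $G_2[T\cup U]$ with the two holes $U$ and $W\cap(T\cup U)$, and check the arithmetic branch by branch. That part is fine.

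The gap is in Case~(b). Your claim that it is ``a single line'' via the bipartite matching between $W_i$ and $V\setminus W$ is false. Concretely, take $\al=\be=1$ and $\nu$ small, say $\nu=0.1$; then $N\approx 1.5n$ and $|W|\approx 0.15n$. Split $W=W_1\cup W_2$ with $|W_1|=|W_2|\approx 0.075n$. The bipartite matching from Lemma~\ref{l2} between $W_i$ and $V\setminus W$ saturates only about $2|W_i|\approx 0.15n$ vertices in each colour, far short of the target $(\al+\eps)n\approx n$. Your inequality ``no choice of $W_1,W_2$ with $|W_1|+|W_2|=\nu N$ can keep both matchings below their respective thresholds'' simply fails here: both are below threshold. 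The point is that in Case~(b) almost all of the work must come from edges \emph{inside} $V\setminus W$, which your argument ignores entirely.

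What the paper does (and what you are missing) is to observe that once both $W_1,W_2$ are nonempty, essentially every vertex of $V\setminus W$ lies in the same colour-$1$ component and in the same colour-$2$ component. One then applies Case~1 of the present lemma to the hole-free graph $H=G[V\setminus W]$ (i.e., the instance $\nu=0$) with reduced targets $\al''\approx\al-2|W_1|/n$ and $\be''\approx\be-2|W_2|/n$, and finally augments the resulting matching by the bipartite edges to $W_i$. The verification that $|V\setminus W|$ is large enough for this recursive call is where the hypothesis on $N$ is used a second time. Without this step, Case~(b) is not proved.
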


\begin{proof}
Let us consider a two-coloring of  edges of a graph $G$ which
fulfills the assumption of the lemma,
and denote graphs induced by the  edges of the first
and the second colors by $G_1$ and $G_2$  respectively.
Let $F$ denote the largest monochromatic component
in this coloring. Without loss of generality we can assume that
$F$ is colored with the first color. We consider two following cases.

\medskip

{\sl Case 1.} $|F|\ge N-\sqrt{\eps} n$.

\smallskip

Let us assume that $F$ contains no matching saturating $(\al+\eps)$
vertices. Then one can use  Lemma~\ref{tutte} to find a
partition of the set of vertices of $F$ into sets $S,T,U$
such that there are no edges of the first color between $T$ and $U$,
there are at most $\sqrt N |T|$ edges of the first color contained in $T$,
and furthermore
\begin{equation}\label{eqd1}
2|S|+|U|\le \al n+\eps n+\sqrt{N}.
\end{equation}
Now let us consider the graph $G'=G_2[T\cup U]$. We shall show that
it contains a component with a matching saturating at least
$(\be+\eps)n$  vertices. Since $G'$ is a `nearly complete' graph
on
$|T|+|U|\ge N-\sqrt{\eps} N-|S|\ge  N-0.5{\al n}-2\sqrt{\eps}n$.
with two holes, $U$ and $W$, we apply Lemma~\ref{double}.
Thus, if $|W|,|U|\le (N-|S|)/2$, Lemma~\ref{double}(i) implies
that there exists a component of the second color with a
matching saturating at least
\begin{equation}\label{eqd44}
\begin{aligned}
N-0.5{\al n}&-2\sqrt{\eps}n-5\eps N\\
&\ge 0.5\be n +\max \{\nu ,0.5\al,0.5\be\}+3\sqrt{\eps}n- 5\eps n-2\sqrt{\eps}n\\
&\ge \be n+\eps n,
\end{aligned}
\end{equation}
vertices. In the case in which $\max\{|U|,|W|\}\ge  (N-|S|)/2$, from
Lemma~\ref{double}(ii) we infer that
there exists a component of the second color with a
matching saturating at least
\begin{equation}\label{eqd2}
\begin{aligned}
2N - 4\sqrt{\eps} n - 7\eps N  -2 & |S|- 2 \max\{|U|,|W|\}\\
&\ge 2N-2|S|-2\max\{|U|,|W|\}-5\sqrt{\eps} N
\end{aligned}
\end{equation}
vertices. Thus, if $|W|\ge |U|$, then using (\ref{eqd1})
we can estimate the right hand side of (\ref{eqd2})
by
\begin{equation}\label{eqd2a}
\begin{aligned}
2N-2|S|-2|W|-5\sqrt{\eps} N&\ge 2N-\al n-\eps n-\sqrt{N}-2\nu N - 5{\eps}N
\\&\ge \be n+\sqrt{\eps}n-2\eps N>\be n +\eps n,
\end{aligned}
\end{equation}
while for $|U|\ge |W|$ (\ref{eqd1}) gives
\begin{equation}\label{eqd3}
\begin{aligned}
2N-2|S|-2|U|-5\sqrt{\eps} N&\ge 2N-2\al n-2\eps n-2\sqrt{N}-5\eps N
\\&\ge \be n+\sqrt{\eps} n-6\eps N>\be n+\eps n.
\end{aligned}
\end{equation}
This completes the proof in this case.

\medskip

{\sl Case 2.} $|F|< N-\sqrt{\eps} n$.

\smallskip

As we have already noticed in the remark preceding the statement of
the lemma, every two-coloring which does not lead to a large monochromatic
component must have a rather special structure.  Thus, let us
denote by  $W_1$ the set of vertices $w_1$ of $W$ such that all but
at most $\eps n$  edges adjacent to $w_1$ are colored with the first color,
by   $W_2$ the set of vertices $w_2$ of $W$ such that all but
at most $\eps n$  edges adjacent to $w_2$ are colored with the second color,
and $W_0=W\setminus (W_1\cup W_2)$. Since in the graph $G$ lacks at most
$\eps^3 n^2$ edges joining $W$ with $V\setminus W$ we must have $|W_0|\le \eps n$.
Furthermore, $|F|\le N-\sqrt{\eps} n$ implies that
$\max\{|W_1|,|W_2|\}\ge 0.5\sqrt{\eps}n$.

Let us set $|W_1|=\al' n$, $|W_2|=\be' n$. Note that $|V\setminus W|\ge
\max\{0.5\al,0.5\be\}+3\sqrt{\eps}n$, so if either $\al'\ge 0.5\al+7\eps$
or $\be' \ge 0.5\be +7\eps$, then we are done by Lemma~\ref{l2}.
More generally, if the graph $H=G[V\setminus W]$ contains either a
monochromatic component in the first color with a matching saturating
at least $\al''n=\al n  - 2\al'n+15\eps n$ vertices, or   a
monochromatic component in the second color with a matching saturating
at least $\be''n=\be n  - 2\be'n+15\eps n$ vertices, the assertion
follows as well.
Indeed, observe first that
because  $|W_1|, |W_2|> 0$, all vertices of $H$ except at most $2\eps n$ belong to the component
of the first color and, in the same way, there are at most $2\eps n$
vertices of $H$ which do not belong to the large component of the second color.
Thus, we can first find a large monochromatic matching in the large
component of the $i$th color, $i=1,2$, and then match unsaturated vertices
of this component to vertices of $W_i$ using Lemma~\ref{l2}.

Now note that every two coloring of a `nearly complete'
graph leads to a large monochromatic component in one of the colors,
so the Case~1 considered above covers all cases in which $\nu=0$.
Furthermore,
\begin{equation*}\label{eqd3d}
\begin{aligned}
0.5\al''+&0.5\be''+\max\{0.5\al'',0.5\be''\}+3\sqrt{\eps}\\
&\le 0.5\al+0.5\be+\max\{0.5\al,0.5\be\}-0.5\sqrt{\eps} -\nu +20\eps+3\sqrt{\eps}\\
&\le 0.5\al+0.5\be+\max\{\nu, 0.5\al,0.5\be\}-\nu +2.9\sqrt{\eps}<N-\nu\,.
\end{aligned}
\end{equation*}
Thus,  Case 1 we have just proved implies that
$H$ contains either a large component in the
first color with a matching saturating at least $\al''n$ vertices, or   a
monochromatic component in the second color with a matching saturating
at least $\be''n$ vertices, and the assertion follows.
\end{proof}

If we wish to have one of the matching in a non-bipartite monochromatic
component, then the condition becomes slightly more complicated.

\begin{lemma}\label{l:trzy}
Let $\al, \be>0$, $\nu \ge 0$, $\max \{\al,\be,\nu\}=1$,
$0<\eps <0.01\min \{\al,\be\}$, and let
\begin{equation}\label{xi}
\begin{aligned}
\xi=\xi (\al, \be, \nu )=\max \Big\{0.5\al+0.5\be+\max &\{0.5\al,0.5\be,\nu\},\\
&1.5\al+\max \{0.5\al,\nu\}\Big\}.
\end{aligned}
\end{equation}
Then,  there exists $n_0$, such that
for every  $n>n_0$ the following holds.

Let $G=(V,E)$ be a graph obtained from the complete graph
on $N=(\xi+5\sqrt{\eps})n$
vertices by removing all edges contained in a subset $W\subseteq V$
of size $\nu N$ and no more than  $\eps ^3n^2$ other edges.
Then, every coloring of the edges of $G$ with two colors leads to
either a monochromatic component colored with the first color
containing a matching saturating at least   $(\al +\eps)n$
vertices, or  a non-bipartite monochromatic component of the second color
containing a matching saturating at least  $(\be +\eps)n$
vertices.
\end{lemma}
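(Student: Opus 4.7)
I argue by contradiction in the style of Lemma~\ref{l:dwa}, now combining it with Lemma~\ref{l:nonbipartite} to force the non-bipartite structure. Suppose no first-colour component contains a matching saturating $(\al+\eps)n$ vertices and no non-bipartite second-colour component contains one saturating $(\be+\eps)n$ vertices. Since $\xi\ge 0.5\al+0.5\be+\max\{0.5\al,0.5\be,\nu\}$, Lemma~\ref{l:dwa} furnishes a second-colour component $F$ whose matching saturates $(\be+\eps)n$ vertices; by hypothesis $F$ must be bipartite. The task is to convert this bipartiteness into a contradiction, using the extra room given by the second term $1.5\al+\max\{0.5\al,\nu\}$ in $\xi$.

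The next move is to apply Lemma~\ref{l:nonbipartite} to $G_2$, obtaining a partition $V=V'\cup V''$ with $G_2[V']$ bipartite, $|E(G_2[V''])|\le 0.5(\be+\eps)n|V''|$, and no $G_2$-edges between $V'$ and $V''$. Fix a bipartition $(P,Q)$ of $V'$ with $|P|\ge|Q|$; then the only $G_2$-edges inside $P\cup V''$ lie within $V''$, and $|P|\ge (N-|V''|)/2$. I then apply Lemma~\ref{tutte} to the single-colour graph $G_1[P\cup V'']$, which by hypothesis has no component with an $(\al+\eps)n$-matching, to get a partition $\{S,T,U\}$ of $P\cup V''$ with $2|S|+|U|<(\al+\eps)n+\sqrt N$, $G_1[T]$ of maximum degree $\le\sqrt N$, and no $G_1$-edges between $T$ and $U$.

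Standard edge-countings across the $(T,U)$ cut and inside $T$ then yield
\begin{equation*}
|T|\,|U| \;\le\; |T\cap W|\,|U\cap W| + 0.5(\be+\eps)n|V''| + o(n^2),
\end{equation*}
\begin{equation*}
\binom{|T|}{2} - \binom{|T\cap W|}{2} \;\le\; 0.5(\be+\eps)n|V''| + o(n^2),
\end{equation*}
using $|T\cap W|+|U\cap W|\le\nu N$ (hence $|T\cap W|\,|U\cap W|\le(\nu N/2)^2$ by AM--GM) together with $|S|+|T|+|U|=|P|+|V''|\ge (N+|V''|)/2$. A case split on the size of $|V''|$ should close the argument: when $|V''|$ is small, the hole term and the Tutte cut are the binding constraints and force $\xi\le 1.5\al+\max\{0.5\al,\nu\}$, contradicting $N\ge(\xi+5\sqrt\eps)n$; when $|V''|$ is a definite fraction of $N$, either $G_1[V'']$ itself is dense enough (the $G_2$-sparsity forces most of $V''$'s edges into $G_1$) to contain an $(\al+\eps)n$-matching directly, or the same arithmetic applied with the second inequality in the dominant role closes the remaining case.

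\textbf{Main obstacle.} The case analysis is the real work here: the second term of $\xi$ is a tight bound over all degenerate configurations, and the constant $1.5\al$ emerges from the asymmetric split $|P|\ge|Q|$ of $V'$ combining with the Tutte budget $2|S|+|U|\le(\al+\eps)n$. All the $\sqrt\eps$ error terms coming from Lemmas~\ref{tutte} and \ref{l:nonbipartite}, together with the $\eps^3n^2$ missing-edge slack, have to be tracked and verified to lie inside the $5\sqrt\eps$ slack in the hypothesis on $N$; this bookkeeping, rather than any single conceptual step, is where the proof demands care.
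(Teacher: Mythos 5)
Your plan diverges from the paper's route (the paper never invokes Lemma~\ref{l:nonbipartite} inside this proof), which would be fine if the argument closed — but it does not. The decisive step is missing: after producing the Tutte partition $\{S,T,U\}$ of $P\cup V''$ you record only the two counting inequalities across the $(T,U)$-cut and inside $T$, and then assert that a case split on $|V''|$ ``should close the argument''. Those inequalities cannot do the job: whenever $T\cup U\subseteq W$ both of them are vacuously satisfied (all the relevant pairs lie inside the hole), and this situation is numerically compatible with everything else you have written down — e.g.\ take $\nu=1$, $\al=\be=0.1$, $V''=\emptyset$, $S$ of size $o(n)$, $|U|=\al n$, $T=P\setminus(S\cup U)$ with $T\cup U\subseteq W$; then $2|S|+|U|\le(\al+\eps)n+\sqrt N$ and $|S|+|T|+|U|\ge N/2$ hold with $N=(\xi+5\sqrt\eps)n$, yet no contradiction follows. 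So additional structural input is indispensable, and it is exactly what the paper supplies: in its main case it works inside the largest $G_1$-component $F_1$ (first showing $|F_1|\ge N-\sqrt\eps n$), re-applies Lemma~\ref{l:dwa} to a recoloured auxiliary graph $\hat G$ to find a second-colour component $F_2'\subseteq G_2[T\cup U]$ with a $(\be+1.1\eps)n$-matching, and then observes that $F_2'$ can remain bipartite only if $G[T]$ has no component of size $10\eps n$, i.e.\ only if $|T|\le|W|+10\eps n$; this forces $|F_1|\le(\al+\nu+2\sqrt\eps)n<N-\sqrt\eps n$, the contradiction. Nothing in your sketch plays the role of this ``bipartiteness forces $T$ to sit essentially inside the hole'' mechanism. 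Your fallback for the other regime is also unsound as stated: when $|V''|\approx\be n$ the bound $e(G_2[V''])\le 0.5(\be+\eps)n|V''|$ allows essentially \emph{all} edges inside $V''$ to be second-coloured, so $G_1[V'']$ need not contain any sizeable matching.

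There is a second, more local problem: Lemma~\ref{tutte} requires that the graph it is applied to have no matching saturating $(\al+\eps)n$ vertices, whereas your contradiction hypothesis only forbids such a matching inside a single component of $G_1$; the graph $G_1[P\cup V'']$ may be disconnected and carry a large matching spread over several components, in which case the Tutte partition you use is not available. The paper sidesteps this by applying Lemma~\ref{tutte} only to the (connected) largest component $F_1$ of $G_1$ and by disposing of the case $|F_1|<N-\sqrt\eps n$ separately via the remark preceding Lemma~\ref{l:dwa} and Lemma~\ref{l2}. In your setting this is probably repairable (all present edges of $P\cup V''$ outside $V''$ are first-coloured, so most of $P\setminus W$ lies in one $G_1$-component), but it is not addressed, and together with the unfinished case analysis — which you yourself flag as ``the real work'' — the proposal is a framework rather than a proof.
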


\begin{proof}
Consider a two-coloring of edges of a graph
 $G=(V,E)$ which fulfills the assumption of the lemma
 and let $G_i$, $i=1,2$, denote the graph spanned by edges of the $i$th color.
 Since
\begin{equation*}
\xi (\al, \be, \nu )\geq 0.5\al+0.5\be+\max \{0.5\al,0.5\be,\nu\},
\end{equation*}
from  Lemma~\ref{l:dwa}  it follows that either there exists a
component of $G_1$ which contains a matching saturating at least
$(\al+\eps)n$ vertices, or there exists a component $F_2$ in $G_2$
which contains a matching saturating at least $(\be +\eps)n$ vertices.
Thus, the assertion follows unless the component $F_2$ is bipartite.
Hence, we shall assume that $F_2$ is bipartite with bipartition
$\{Z_1,Z_2\}$ and split the proof into the
following two cases.

\medskip

{\sl Case 1.} $|F_2|\ge N-\sqrt{\eps} n$.

\smallskip

Since in this case
$$|Z_1|+|Z_2| \ge (1.5\al+\max \{0.5\al,\nu\}+4\sqrt{\eps})n\,,$$
for some $i_0=1,2$, we have both $|Z_{i_0}|\ge (\al+2\sqrt{\eps})n$
and $|Z_{i_0}\setminus W|\ge (0.5\al+\sqrt{\eps})n$. But then,
due to Lemma~\ref{l2}, the graph $G_1[Z_{i_0}]$ contains a component
with a matching saturating at least $(\al+\eps)n$ vertices.

\medskip

{\sl Case 2.} $|F_2|< N-\sqrt{\eps} n$.

\smallskip

Let $F_1$ denote the largest component of $G_1$. Let us consider
first the case when $|F_1|<N-\sqrt{\eps}n$. Then, by the remark
preceding the statement of Lemma~\ref{l:dwa}, all but at most $\sqrt{\eps}n$
vertices of $V\setminus W$ are contained in one of the sets of the bipartition
of $F_2$, say, in $Z_1$. But then all edges of $G$ with both ends
in $Z_1$ are colored with the first color and $|Z_1|\ge (3\al/2+\sqrt{\eps})n$.
Consequently, by Lemma~\ref{l2}, there is a component in $G_1$
with a matching saturating at least $(\al n+\eps)n $ vertices and the
assertion follows.

Thus, we may and shall assume that the largest component $F_1=(V_1,E_1)$
of $G_1$
has at least $N-\sqrt{\eps}n$ vertices. Suppose that it contains no matchings
saturating at least $(\al+\eps)n$ vertices. Let $S,T,U$ be the sets whose
existence is assured by Lemma~\ref{tutte}; in particular we have
\begin{equation}\label{eqt1}
2|S|+|U|\le \al n+\eps n+\sqrt{N}.
\end{equation}
Note that the subgraph $H_2=G_2[T\cup U]$ contains a component
$F'_2$ with a matching saturating at least $(\be+1.1\eps)n$ vertices.
Indeed, consider graph $\hat G$
with the same set of vertices as $F_1$, obtained from $G$ by deleting
all edges of $G_1$ with both ends in $T$. Then $\hat G$ fulfills assumptions
of Lemma~\ref{l:dwa} with $\eps'=1.1\eps$. On the other hand, if we color
with the first color  all edges of $\hat G$ which have either one end in $S$,
or both ends in $U$, we create no matching in this color  saturating more than
$(\al+1.1\eps)n$ vertices. Hence, by Lemma~\ref{l:dwa},
there must be component $F'_2$ in
the second color  which contains a matching saturating at least $(\be+1.1\eps)n$
vertices, and, because of our construction, $F'_2\subseteq H_2$.

If $F'_2$ is non-bipartite we are done, so let us assume that $F'_2$ is bipartite.
Since $H_2$ contains all edges of $G$ joining $T$ and $U$ and all but
$|T|\sqrt{N}$ edges contained in $T$, it is easy to see that if a graph
$G[T]$ contains a component of size, say, $10\eps n$, then all but at most
$\sqrt{\eps}n$ vertices of $H_2$ lie in the same giant component which
clearly is not bipartite. Thus, because of Lemma~\ref{l2}, in order to
keep $F'_2$ bipartite the set $T$ cannot be much larger than the hole $W$,
i.e.
\begin{equation}\label{eqt1b}
|T|\le |W|+10 \eps n\le (\nu+10\eps)n\,.
\end{equation}
However, from (\ref{eqt1}) and (\ref{eqt1b}) it follows that
\begin{align*}
|F_1|&=|T|+|U|+|S|\leq (\al+\eps +\nu+10\eps)n+\sqrt{N}\\
&\le (\al+\nu+2 \sqrt{\eps})n<(1.5\al+\nu + 3\sqrt{\eps})n\\
&< N-\sqrt{\eps}n\,,
\end{align*}
while as, we have seen,  $|F_1|>N-\sqrt{\eps}n$.
Thus, the component $F'_2$ is non-bipartite and the assertion follows.
\end{proof}

{\sl Remark.} It is easy to construct colorings which shows that the estimates
given by Lemmas~\ref{l:dwa} and \ref{l:trzy} are, up to epsilon terms,
best possible.

\section{Triple of  cycles: one odd, two even}\label{sec:even}

In this part of the paper we prove Theorem~\ref{thm:main}(ii),
i.e., we estimate the Ramsey number for three long cycles,
in which one is odd and the other two have even length.
Let us start however with the following consequence of Lemma~\ref{l:dwa}.

\begin{lemma}\label{f1}
Let $\al_1 \geq \al_2>0$, $0<\eps<0.01\al_2$ and let
$G=(V,E)$ be a graph obtained from the complete graph on
$|V|\ge (2\al_1+\al_2+9\seps)n$ vertices by deleting at most
$|E|\leq \eps^4 n^2$ of its edges.
Then there exists $n_0$ such that for every $n\ge n_0$ the following holds.

Let us suppose that the edges of $G$ are colored with three colors
which spans graphs $G_1$, $G_2$, and $G_3$, and that the graph $G'$
which is a union of the bipartite components of $G_3$ has at least
 $(1.5\al_1+0.5\al_2+8\seps)n$ vertices.
Then, there exists either  a monochromatic component
of the first color which contains a matching
saturating at least $(\al_1 +\eps)n$ vertices,
or a monochromatic component
of the second color with a matching saturating at least
 $(\al_2 +\eps)n$ vertices.
\end{lemma}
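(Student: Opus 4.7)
My plan is to reduce to Lemma~\ref{l:dwa} by carving out a vertex set on which only the first two colors appear, with the remainder playing the role of the hole. The bipartite components of $G_3$ provide exactly such a structure: within one side of the bipartition of each bipartite component there are no edges of the third color, and between distinct $G_3$-components there are no edges of the third color either.

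Concretely, I would pick from each bipartite component of $G_3$ the larger of its two bipartition classes and let $A$ denote their union, so that $|A|\ge |V(G')|/2 \ge (0.75\al_1+0.25\al_2+4\seps)n$. Setting $W=V\setminus V(G')$ and $X=A\cup W$, the previous observation guarantees that every edge of $G[X]$ colored with the third color lies inside $W$. Removing these edges yields a two-colored graph $\hat G$ on $|X|$ vertices, obtained from the complete graph on $X$ by deleting all edges inside $W$ together with at most $\eps^4 n^2$ further (originally missing) edges. This fits the framework of Lemma~\ref{l:dwa} with $\al=\al_1$, $\be=\al_2$, and hole $W$, provided $|X|$ is sufficiently large.

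The remaining, and main, work is to verify the size condition
\[
|X|\ge (0.5\al_1+0.5\al_2+\max\{|W|/n,0.5\al_1\}+3\seps)n
\]
in the two possible regimes for $|W|$. When $|W|\le 0.5\al_1 n$, the required bound reduces to $(\al_1+0.5\al_2+3\seps)n$, which follows from $|X|\ge |V|-|V(G')|/2\ge |V|/2\ge(\al_1+0.5\al_2+4.5\seps)n$. When $|W|>0.5\al_1 n$, the bound becomes $(0.5\al_1+0.5\al_2+|W|/n+3\seps)n$; using $|X|\ge |V(G')|/2+|W|$, it suffices to check that $|V(G')|\ge(\al_1+\al_2+6\seps)n$, which follows from the hypothesis $|V(G')|\ge(1.5\al_1+0.5\al_2+8\seps)n$ together with $\al_1\ge\al_2$. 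The threshold in the hypothesis is precisely calibrated to accommodate both regimes.

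With the size condition in hand, I would invoke Lemma~\ref{l:dwa} on $\hat G$ to obtain either a component of the first color with a matching saturating at least $(\al_1+\eps)n$ vertices, or a component of the second color with a matching saturating at least $(\al_2+\eps)n$ vertices; both immediately lift back to $G$ since $\hat G\subseteq G$ preserves the coloring, completing the proof.
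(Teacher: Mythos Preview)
Your proof is correct and follows essentially the same approach as the paper: take the larger side of the bipartition of $G'$ together with $W=V\setminus V(G')$ to form a two-colored nearly complete graph with hole $W$, and apply Lemma~\ref{l:dwa}. The paper phrases this as choosing a bipartition $\{X,Y\}$ of $G'$ with $|X|\ge|Y|$ and working on $V\setminus Y$, and verifies the size condition via a single unified inequality (using $\nu\le 0.5\al_1+0.5\al_2+\seps$ to show $0.5\nu+0.5\al_1\ge\max\{\nu,0.5\al_1,0.5\al_2\}-0.5\seps$) rather than your two-regime case split, but the content is the same.
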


\begin{proof} Observe first that it is enough to prove the lemma
for `nearly complete' graphs  $G=(V,E)$ with precisely
$|V|= (2\al_1+\al_2+5\seps)n$ vertices.
Let $G_1$, $G_2$, $G_3$ denote the graphs spanned in $G$ by the first, the
second, and the third color,  respectively. Furthermore,
let $G'$ denote the bipartite graph with bipartition $\{X,Y\}$,
where $|X|\ge |Y|$,
which is the union of all bipartite components of $G_3$.
Let us consider a subgraph $H=(\hat V, \hat E)$ of $G$ whose vertex set is the set
$V\setminus Y$ and edges are all edges of $G$ which are colored with either
the first or the second color. Thus, $H$ is a `nearly complete' graph
on $|\hat V|=|V\setminus Y|$ vertices with a hole $W=V\setminus(X\cup Y)$
of size $|W|=\nu n$. Thus, to complete the proof we need
to verify if the assumptions of  Lemma~\ref{l:dwa} hold for two-colored~$H$.
 To this end note  that
$$\nu\le 0.5\al_1+0.5\al_2+\seps\,,$$
and so
\begin{equation*}\label{eq6}
0.5\nu + 0.5\al_1 \ge \nu -0.5\seps\,.
\end{equation*}
Thus, for the number of vertices of $H$ we get
\begin{align*}
|\hV|&\ge\frac{(2\al_1+\al_2+9\seps)n-\nu n}{2}+\nu n\\
&\geq (0.5\al_1+0.5\al_2+0.5\nu+0.5\al_1 +4.5\seps)n\\
&\geq (0.5\al_1+0.5\al_2+\max\{\nu,0.5\al_1,0.5\al_2\} +4\seps)n\,,
\end{align*}
and the assertion follows from  Lemma~\ref{l:dwa}.
\end{proof}

Now we can find the asymptotic value of
$R(C_{\ev {\al_1 n}},C_{\ev{\al_2 n}},C_{\od{\al_3 n}})$.

\begin{proof}[Proof of Theorem~\ref{thm:main}(ii)]
Let us first estimate
$R(C_{\ev {\al_1 n}},C_{\ev{\al_2 n}},C_{\od{\al_3 n}})$
from above.
{From} Lemma~\ref{lr2} it follows that to prove the upper bound
in Theorem~\ref{thm:main}(ii)  it is enough to
verify that for $\al_1,\al_2,\al_3>0$, $\al_1\ge \al_2$,  and
$c=\max \{2\al_1 +\al_2,0.5\al_1+0.5\al_2+\al_3\}$,
we have  $\tau_{2,1}(\al_1,\al_2,\al_3;c)$ holds.
Observe that we may and shall assume that $\max\{\al_1,\al_2,\al_3\}=1$.

Let $\eps >0$ and let $G=(V,E)$ be a graph
obtained from the complete graph on  $N=(c+10\seps)n$ vertices by removing
at most $|E|\ge \eps^5 n^2$ edges. Let us color edges of $G$ with
three colors and denote the subgraph spanned by the $i$th color by $G_i$,
$i=1,2,3$. Let us consider the two following cases.

\medskip

{\sl Case 1.} $\al_1\ge\al_3$.

\smallskip

Then the number of vertices in $G$ is $|V|=(2\al_1+\al_2+10\seps)$.
If the average density $d(G_3)$ of $G_3$ is smaller than $(\al_1+8\seps)n$,
then either $d(G_1)\ge (\al_1+\eps)n$, or $d(G_2)\ge (\al_2+\eps)n$
and the assertion follows from Lemma~\ref{EG}. Thus, let us consider the case
$d(G_3)>(\al_1+8\seps)n$. Assume that $G_3$ contains no component
with a matching saturated as least $\al_3 n$ vertices and
let $G'$ and $G''$ be two subgraphs of $G_3$ whose existence is
assured by Lemma~\ref{l:nonbipartite}. Note that
$$d(G'')\le \al_3 n < (\al_1+8\seps)n <d(G_3)$$
and so
\begin{equation}\label{eq5}
d(G')\ge d(G_3)> (\al_1+8\seps)n\,.
\end{equation}
Since each bipartite graph with the average degree $m$ must have
at least $2m$ vertices, from (\ref{eq5})  we infer that
$G'$ contains at least
$$2\al_1+16\seps> 1.5\al_1+0.5\al_2+8\seps $$
vertices. Now the existence of a monochromatic component with large
matching in one of the first two colors follows from
Lemma~\ref{f1}.

\medskip

{\sl Case 2.} $\al_1\le \al_3$.

\smallskip

In this case $G$ has  $|V|=(0.5\al_1+0.5\al_2+\al_3+10\seps)n$
vertices. Figaj and {\L}uczak~\cite{FL} (cf. Theorem~\ref{thm:main}(i))
showed that then
either there exists $i_0$, $i_0=1,2$, such that
$G_{i_0}$ contains a monochromatic component in the $i_0$th
color with a matching saturating
at least $\al_{i_0} n$ vertices, so we are done, or there
exists a component $G'$ in the third color which contains a
matching saturating at least
\begin{align*}
|V|-(0.5\al_1+0.5\al_2+\seps)n&\ge (\max\{1.5\al_1+0.5\al_3,\al_3\}+9\seps) n \\
&\ge (1.5\al_1+0.5\al_2+9\seps)n
\end{align*}
vertices.
If $G'$ is non-bipartite we are done again, if not then one can apply
Lemma~\ref{f1} to find a monochromatic component with large matching in one of
the first two colors.

\smallskip

Thus, we have showed that
\begin{multline*}
R(C_{\ev {\al_1 n}},C_{\ev{\al_2 n}},C_{\od{\al_3 n}})\\
\le (\max \{2\al_1 +\al_2,\al_1+2\al_2,0.5\al_1+0.5\al_2+\al_3\}+o(1))n\,.
\end{multline*}

In order to complete the proof of Theorem~\ref{thm:main} we need to
specify colorings which result in the matching lower bound.

Again we may and shall assume that $\al_1\ge \al_2$.
Let us consider the complete graph on
$$N=2\ev{\al_1 n} +\ev{\al _2 n}-4$$
vertices whose vertex set is partitioned into four disjoint sets
$V_1$, $V_2$, $V_3$, $V_4$, where $|V_1|=|V_2|=\ev{\al_1 n}-1$ and
 $|V_3|=|V_4|=0.5\ev{\al_2 n}-1$. Let us color all edges
 contained in one of the sets $V_i$'s  with the first color,
 all edges joining  $V_1$ and $V_3$,  and those joining $V_2$ and $V_4$,
with the second color, and all other edges with the third color.
It is easy to see that this coloring gives neither a cycle
longer than $\ev{\al_1 n}-1$ in the first color,
nor a cycle longer than $\ev{\al_2 n}-2$ in the second color.
It also leads to no odd cycles in the third color. Consequently,
\begin{equation*}
R(C_{\ev {\al_1 n}},C_{\ev{\al_2 n}},C_{\od{\al_3 n}})\\
\ge 2\ev{\al_1 n} +\ev{\al _2 n}-3\,.
\end{equation*}

Finally, let us consider the complete graph on
$$\bar N=0.5\ev{\al_1 n}+0.5\ev{\al_2 n}+\od{\al_3 n}-3$$
vertices
whose set of vertices $\bar V$ is split into three parts
$\bar V_1$, $\bar V_2$, $\bar V_3$, where $|\bar V_1|=0.5\ev{\al_1 n}-1$,
 $|\bar V_2|=0.5\ev{\al_2 n}-1$, and  $|\bar V_3|=\od{\al_3 n}-1$.
Let us color edges of $K_{\bar N}$ by coloring all edges contained
in $\bar V_3$ with the third color,
all edges with at least one end in $\bar V_2$ with the second color,
and all other edges with the first color. In this coloring
there are no cycles longer then $\ev{\al_1 n}-2$ in the first color,
no cycles longer then $\ev{\al_2 n}-2$ in the second  color, and
no cycles longer then $\od{\al_3 n}-1$ colored in the third color.
Thus,
\begin{equation*}
R(C_{\ev {\al_1 n}},C_{\ev{\al_2 n}},C_{\od{\al_3 n}})\\
\ge 0.5\ev{\al_1 n}+0.5\ev{\al_2 n}+\od{\al_3 n}-2\,.
\end{equation*}
This completes the proof of Theorem~\ref{thm:main}(ii).
\end{proof}

\section{Triple of  cycles: two odd, one even}\label{sec:odd}

In this section we shall complete the proof of Theorem~\ref{thm:main},
showing the estimates for
$R(C_{\ev {\al_1 n}},C_{\od{\al_2 n}},C_{\od{\al_3 n}})$.

\begin{proof}[Proof of Theorem~\ref{thm:main}(iii)]
The proof of the upper bound for
$R(C_{\ev {\al_1 n}},C_{\od{\al_2 n}},C_{\od{\al_3 n}})$
is, again, based on Lemma~\ref{lr2}, which states that it
is enough to check that for $\al_1,\al_2,\al_3>0$,
where $c=\max \{4\al_1,\al_1+2\al_2,\al_1+2\al_3\}$,
we have $\tau_{1,2}(\al_1,\al_2,\al_3;c)$ holds.
Note that we may and shall assume that $\al_2=\al_3$, and
$\max\{\al_1,\al_2\}=1$.

Thus, for a small  $\eps >0$ consider a graph  $G=(V,E)$
obtained from the complete graph on  $N=(c+15\seps)n$ vertices by removing
at most $|E|\ge \eps^5 n^2$ edges. Let $G_1$, $G_2$, $G_3$
be a partition of $G$ induced by a three-coloring of its edges.
If the average degree $d(G_1)$ of $G_1$ is larger than $(\al_1+\eps) n$
then, by Theorem~\ref{EG}, it contains also a cycle longer than $\al_1 n +1$
and so we are done. Thus, let us assume that $d(G_1)<(\al_1+\eps)n$.
Observe that $c\ge \al_1+2\al_2$, so in this case one of the graphs
$G_2$ and $G_3$, say, $G_2$, has the average degree larger than
$$\theta n = 0.5|V|-(\al_1+\eps)n\ge (\al_2+7\seps)n\,.$$
 Suppose that $G_2$ contains no non-bipartite
component saturating at least $(\al_2 +\eps) n$ vertices. Then,
one can  apply to $G_2$ Lemma~\ref{l:nonbipartite} and decompose it
into two subgraphs $G'$ and $G''$,
where $G'$ is bipartite with bipartition $(X,Y)$, $|X|\ge |Y|$
and   $d(G'')\le \al_2 n$.  Since
$d(G'')\le \al_2 n < d(G_2)$,
we must have $d(G')\ge d(G_2)\ge \theta n$, and so
$|X|+|Y|\ge 2\theta n$, and
$|X|\ge \theta n$. Let us consider the graph $H=(V',E')$
with the vertex set $V\setminus Y$ whose edges are all edges of $G$
which are colored with the first or the third color and either
are contained in $X$, or join $X$ to $V\setminus Y$. Then $H$
is `nearly complete' graph with the hole $W=V\setminus(X\cup Y)$,
$|W|=\nu n$, colored with two colors.
 We shall show that $H$ fulfills
assumptions of Lemma~\ref{l:trzy} so that it contains either a component
in the first color containing a matching saturating at least $\al_1 n$
vertices, or a non-bipartite component in the third color with a matching
saturating at least $\al_3 n$ vertices.

Let us consider two cases.

\medskip

{\sl Case 1.} $\al_2\le 1.5\al_1$.

\smallskip

In this case we have $|V|=(4\al_1+15\seps)n$,
$\theta \ge 1.5\al_1+7\seps$,
$$\nu\le |V|/n -2\theta \le \al_1+\seps$$
and
$$|V'|\ge \frac{|V|-\nu n}{2}+\nu n=(2\al_1+0.5\nu+7\seps)n\,.$$
Thus, we have to verify that if $\al_2\le 1.5\al_1$, and $\nu\le \al_1+\seps$,
then for the function $\xi(\al_1,\al_2,\nu)$ defined by (\ref{xi}),
i.e.

%\begin{equation*}
\begin{multline*}
\xi(\al_1,\al_2,\nu)=\max\{\al_1+0.5\al_2, 0.5\al_1+\al_2,\\
0.5\al_1+0.5 \al_2+\nu, 2\al_1, 1.5\al_1+\nu\}\,,
\end{multline*}
%\end{equation*}
 we have
\begin{equation*}
\xi(\al_1,\al_2,\nu)\le 2\al_1+0.5\nu + \seps\,.
\end{equation*}
However, the above fact follows from the definition of $\xi(\al_1,\al_2,\nu)$
and the following five inequalities:
\begin{align*}
\al_1+0.5\al_2&\leq \al_1+ 0.75\al_1\leq 2\al_1+0.5\nu\,,\\
%&\quad\\
0.5\al_1+\al_2&\le 2\al_1\le  2\al_1+0.5\nu\,,\\
%&\quad\\
0.5\al_1+0.5\al_2+\nu&=0.5\al_1+0.75\al_1+0.5\nu+0.5\nu\\
&\quad\quad\quad\quad\leq 2\al_1 +0.5\nu+\seps\,,\\
%&\quad\\
2\al_1&\leq 2\al_1+0.5\nu\,,\\
%&\quad\\
1.5\al_1+\nu&=1.5\al_1+0.5\nu+0.5\nu \leq 2\al_1+0.5\nu+\seps\,.\\
\end{align*}
Thus, the existence of a monochromatic
component which contains a large matching in either
the first or the second color follows from  Lemma~\ref{l:trzy}.

\medskip

{\sl Case 2.} $\al_2\ge 1.5\al_1$.

\smallskip

Here $|V|=(\al_1+2\al_2+15\seps)n$,
$\theta \ge \al_2+7\seps$,
$\nu \le \al_1+\seps\le \tfrac{2}{3}\al_2+\seps$,
and
$$|V'|\ge \frac{|V|-\nu n}{2}+\nu n=(0.5\al_1+\al_2+0.5\nu+7\seps)n\,.$$
Again, we check that in this case
\begin{equation}\label{eqxi}
\xi(\al_1,\al_2,\nu)\le 0.5\al_1+\al_2+0.5\nu+ \seps\,,
\end{equation}
by the direct inspection:
%estimates of the five  expressions from   the definition of
%$\xi(\al_1,\al_2,\nu)$:
\begin{align*}
\al_1+0.5\al_2&\leq 0.5\al_1+ \tfrac{5}{6}\al_2\leq 0.5\al_1+\al_2+0.5\nu\,,\\
%&\quad\\
0.5\al_1+\al_2&\le 0.5\al_1+\al_2+0.5\nu\,,\\
%&\quad\\
0.5\al_1+0.5\al_2+\nu&=0.5\al_1+0.5\al_2+0.5\nu+0.5\nu\\
&\quad\quad\quad\quad\leq 0.5\al_1+\al_2+0.5\nu+\seps\,,\\
%&\quad\\
2\al_1&\leq 0.5\al_1+\al_2+0.5\nu\,,\\
%&\quad\\
1.5\al_1+\nu&\le\al_2+0.5\nu+0.5\nu \leq 0.5\al_1+\al_2+0.5\nu+\seps\,.
\end{align*}
Now we can employ Lemma~\ref{l:trzy} to complete the proof of
the lower bound for $R(C_{\ev {\al_1 n}},C_{\od{\al_2 n}},C_{\od{\al_3 n}})$.

In order to show the lower bound for
$R(C_{\ev {\al_1 n}},C_{\od{\al_2 n}},C_{\od{\al_3 n}})$
let us observe  that the same coloring we have employed for estimating
the Ramsey number for three odd cycles in the proof of
Theorem~\ref{thm:main}(iv) can be used to show that
$$R(C_{\ev{\al_1 n}},C_{\od{\al_2 n}},C_{\od{\al_3 n}})
\ge 4\od{\al_1 n}-3\,.$$

Finally, let us consider the complete graph  on
$$\tilde N= \ev{\al_1 n}+2\od{\al_2 n}-4$$
vertices whose vertex set is partitioned into four parts $\tV_1$,
$\tV_2$, $\tV_3$, $\tV_4$, such that $|\tV_1|= |\tV_2|=0.5\ev{\al_1 n}-1$,
and $|\tV_3|= |\tV_4|=\od{\al_2 n}-1$.
Let us color all  edges of $K_{\tilde N}$
contained in either $\tV_1$ or $\tV_2$,
with the first color, and use the same color
to color all edges between the pairs $\tV_1$ and $\tV_3$, and
between $\tV_2$ and $\tV_4$,
all edges contained in either $\tV_3$ or $\tV_4$ we color
with the second color,  and all other edges with the
third color. It can be easily seen that in this coloring no cycle
longer than $0.5\ev{\al_1 n}-2$ is colored with the first color,
no cycle longer than $\od{\al_2 n}-1$ is colored with the second color,
and no odd cycle is colored with the third color.
Consequently,
$$R(C_{\ev{\al_1 n}},C_{\od{\al_2 n}},C_{\od{\al_3 n}})
\ge \ev{\al_1 n}+2\od{\al_2 n}-3\,.$$
An analogous construction gives
$$R(C_{\ev{\al_1 n}},C_{\od{\al_2 n}},C_{\od{\al_3 n}})
\ge \ev{\al_1 n}+2\od{\al_3 n}-3\,.$$
This completes the proof of (iii) and  Theorem~\ref{thm:main}.
\end{proof}

\bibliographystyle{amsplain}

\end{document}